\newcommand{\der}{\textup{der}}
\title{Equidistribution in supersingular Hecke orbits}
\author{Arno Kret}
\begin{document}

\begin{abstract}
We prove that Hecke operators act with equidistribution on the basic stratum of certain Shimura varieties. We relate the rate of convergence to the bounds from the Ramanujan conjecture of certain cuspidal automorphic representations on $\Gl_n$ for which this conjecture is known, and therefore we obtain optimal estimates on the rate of convergence. 
\end{abstract}

\maketitle

\section*{Introduction}

\nocite{chaitemp}

Let $(G, X)$ be a Shimura datum. In particular, $G$ is a reductive group over $\Q$, and $X$ is a Hermitian symmetric domain. Let $\Gamma \subset G(\R)$ be a torsion-free congruence subgroup and let $S$ be the quotient $\Gamma \backslash X$. Let $\{ T_m \}$ ($m \in \N$) be a sequence of Hecke operators of $G$, then the operators $T_m$ act on the space $S$. Equidistribution means roughly that the images of points $x \in S$ spread out equally over $S$ under the action of the sequence of operators $\{T_m\}$. 

Let us make this more precise. The space $S$ is of finite volume with respect to its Haar measure $\dd \mu$. The group $G(\R)$ acts by multiplications on the right on $S$, and by translations on the space $L^2(S)$ of square integrable complex valued functions on $S$. This way $L^2(S)$ is a representation of $G(\R)$, and carries an action of the Hecke operators. Now, \emph{equidistribution} is true for the quadruple $(G, X, \Gamma, \{T_m\})$, if, for all $f \in L^2(S)$, the pointwise limit 
\begin{equation}\label{intro_AA}
\lim_{m \to \infty} \frac {T_m(f)}{\deg(T_m)}, 
\end{equation}
converges to the constant function on $S$ whose value is equal to the integral
\begin{equation}\label{intro_BB}
\int_S f \dd \mu. 
\end{equation}

A typical example is the case where $G = \GL_2$, $X$ is the complex upper half plane, and $S$ a modular curve, say $S = Y_1(N)$ where $N \in \N$ with $N \geq 4$. On $Y_1(N)$ we have the classical operators $T_m$ from the theory of modular forms~\cite{diamond.modforms}. In~\cite[\S2, p.~197]{MR1827734} is shown that equidistribution is true for $(\Gl_2, X, \Gamma_1(N), \{T_m\})$. 

When equidistribution is established, the next step is to give bounds on the rate of convergence of the limit in Eq.~\eqref{intro_AA}. Optimal bounds require precise knowledge on the shape of the discrete spectrum of reductive groups (Arthur's conjectures) and the Ramanujan conjecture for cuspidal automorphic representations of $\GL_n$. Unfortunately, these conjectures are currently proved only in certain restricted ---~but significant~--- cases. Thus, most known bounds 
are only approximations of the expected rate of convergence.

More generally, equidistribution has been studied for other quotients of real reductive groups, which are not Shimura varieties. For example, one might consider the group $G = \GL_n$ for $n \geq 3$ and put $X := \PGL_n(\Z) \backslash \PGL_n(\R)$. On the group $\Gl_n$ one has Hecke operators $T_{r, m}$ analogous to the $T_m$ for $\Gl_2$. For a definition see Equation~\eqref{thefunction} in the text although there we write $T_{r,m}^+$ for them and we work with the closely related group $\GL_1 \times \GL_n$. In~\cite[Thm 1.2]{MR1827734} Clozel and Ullmo show that equidistribution is true in this setting for $\GL_n$ and the operators $T_{r, m}$. In~\cite{MR2058609} quotients $\Gamma \backslash G(\R)$ of more general classes of real reductive groups are considered. 

In this article we take equidistribution to a different direction: We study equidistribution in characteristic $p$. Let us return from the more and more general quotients $\Gamma \backslash G(\R)$ to the case where $S$ is a Shimura variety. Then, apart from being a complex analytical object, $S$ has the natural structure of a smooth quasi-projective variety (Baily-Borel) and is canonically defined over a certain number field $E$ called the reflex field of $S$ (Shimura)~\cite{MR0498581}. To simplify, let us restrict henceforward to the case where $S$ is a Shimura variety of PEL type; this means that $S$ parametrizes Abelian varieties with additional PEL structures (Polarization, Endomorphisms, Level structures). Let $\p$ be a prime of $E$ where $S$ has \emph{good reduction}; this means in particular that the PEL type moduli problem extends to a problem over $\cO_{E_\p}$, and that this extended problem is representable by a smooth and quasi-projective scheme $\cS/\cO_{E_\p}$ \cite[\S5]{MR1124982}. Write $p$ for the rational prime number below $\p$ and write $S_\p := \cS \otimes \fq$, where $\fq$ is the residue field of $\p$. One may wonder if equidistribution also holds in the mod $p$ variety $S_\p$ in a suitable sense (to give meaning to ``in a suitable sense'' is part of the question). 
The \emph{initial idea} is to find a definition of a measure $\mu$ on the set $S_\p(\lfq)$, and prove that for complex valued functions $f$ on $S_\p(\lfq)$, the limit in Equation~\eqref{intro_AA} converges to an integral with respect to $\mu$. However, this idea is far too naive: The special fibre $S_\p$ has several stratifications, and any correct statement of equidistribution has to take these stratifications into account. For us the most important stratification on $S_\p$ is the \emph{Newton stratification}~\cite[\S7]{MR2141705}. By definition, the Newton strata of $S_\p$ are  the loci in $S_\p$ where the isogeny class of the $p$-divisible group $\cA_x[p^\infty]$ with additional PEL-structures is constant. Here, $\cA_x$ is the Abelian variety with PEL structures corresponding to the point $x \in S_\p(\lfq)$. The Newton stratification of $S_\p$ is a canonical decomposition of $S_\p$ into a finite union of locally closed subvarieties, and these subvarieties are stable under the action of the Hecke correspondences. Consequently, any point lying in one of those strata will stay inside this stratum under the action of all Hecke operators. Thus one does not expect the points modulo $p$ to spread out equally over $S_\p$, whatever `spread out' should mean here. In particular, the initial idea we sketched above is false. 

One could refine the equidistribution question, and ask: ``Is there then equidistribution inside the individual Newton strata?'' The answer to this question is ``no'' again in general, because the Newton strata can often be refined further, and these further refinements are also stable under the Hecke operators (outside $p$). However, we show in this article that there \emph{are} a large number of interesting cases where equidistribution is true inside the \emph{basic} Newton stratum. We consider essentially\footnote{We also treat cases where the unitary group $U \subset G$ arises from restriction of scalars of a totally real extension $F^+$ over $\Q$. Then, in particular $U$ splits over a certain CM extension $F$ of $\Q$. See Section~\ref{sect.extension} for the complete list of conditions.} the case where the group $G_\R$ is of the form $GU(n-1, 1)$, and $\p$ is a `split' place, which means that $G_{\qp}$ is isomorphic to $\Gm \times \Res_{M/\qp} \Gl_{n, M}$ for some finite \'etale $\qp$-algebra $M$. The corresponding Shimura varieties $S$ are certain arithmetic quotients of the complex unit ball. In the case at hand the basic stratum $B \subset S_\p$ is finite \'etale over $\fq$, and we prove equidistribution with respect to the \emph{counting measure} on the finite set $B(\lfq)$. 

Let us, before giving the precise statement of our results, start with an example. Consider again the modular curve $Y_1(N)$ ($N \geq 4$) and let $p$ be prime number that is coprime to $N$ and let $B$ be the basic stratum of $Y_1(N)_{\Fp}$. By definition $B(\lfp)$ is the set of pairs $(E, \eta) \in Y_1(N)(\lfp)$ of elliptic curves $E$ equipped with a point $\eta$ of order $N$ where $E$ is \emph{supersingular}. The famous Eichler-Deuring mass formula (cf.~\cite[Thm.~12.4.5]{MR772569}) says that 
$$
\# B(\lfq) = \deg(\Gamma_1(N)) \frac {p-1}{24}, 
$$ 
where $\deg(\Gamma_1(N))$ is the degree of the $\Gamma_1(N)$ moduli problem. 

Apart from studying the cardinal of $B(\lfq)$, one may also study how the Hecke correspondences act on it. In particular the sequence of Hecke operators $\{T_m\}$ considered above in the complex setting, also acts with equidistribution on the supersingular locus $B(\lfq)$: Let $A$ be the free complex vector space with basis $B(\lfq)$. Equip $A$ with a (any) norm $|\cdot |$, and define for $f \in A$, $\Avg(f) \in A$ to be the constant function on $B(\lfq)$ whose value is $\frac 1{\# B(\lfq)} \sum_{x \in B(\lfq)} f(x)$. There exists a constant $C \in \R_{>0}$ such that for any $\eps > 0$ there exists an integer $M$ such that for all $m > M$, coprime to $N p$, we have \begin{equation}\label{menaresresult} \left| \frac {T_m(f)}{\deg(T_m)} - \Avg(f)  \right| < C \cdot m^{\eps - \frac 1 2}. \end{equation} In particular the sequence $T_m(f)/\deg(T_m)$ converges to the average function $\Avg(f)$. The result in Equation~\eqref{menaresresult} is due to Menares~\cite{menares}. He stated the corresponding result for the basic stratum of the algebraic stack $Y_1(1)$ of elliptic curves with no additional structures, but the proof is the same. In fact, we learned the idea of studying equidistribution in supersingular Hecke orbits of more general Shimura varieties from Menares's article. 

Let us now give the statement of our results. We consider a Shimura variety of PEL-type, of type A, as considered by Kottwitz in~\cite{MR1124982}. We assume fixed a PEL-datum consisting of: A simple algebra $T$ over a CM field $F$; a positive involution on the algebra $T$ inducing complex conjugation on $F$; a Hermitian $T$-module $(V, \langle \cdot, \cdot \rangle)$, where $\langle \cdot, \cdot \rangle$ is symplectic; a morphism $h \colon \C \to \End_T(V)_\R$ of $\R$-algebras such that $h(\li z) = h(z)^*$ for all $z \in \C$, and the symmetric bilinear form $V_\R \otimes V_\R \to \R$, $(x, y)\mapsto (x, h(i) y)$ is positive definite. Let $G/\Q$ be the algebraic group such that for every $\Q$-algebra $R$ the group $G(R)$ consists of the elements $g \in \GL(V \otimes R)$ respecting the pairing $\langle \cdot, \cdot \rangle$ up to a scalar. We restrict $h$ to $\C^\times \subset \C$, and write $X$ for the $G(\R)$-conjugacy class of the morphism $h^{-1}$; then $(G, X)$ is a Shimura datum. We assume that the CM field $F$ is of the form $\cK F^+$, with $\cK/\Q$ a quadratic imaginary extension of $\Q$ and $F$ a totally real extension of $\Q$. We let $p$ be a prime of good reduction in the sense of Kottwitz~\cite[\S5]{MR1124982} and we assume that $p$ splits in $\cK/\Q$. Let $K \subset G(\Af)$ be a compact open subgroup of the form $K = K_p K^p$, with $K_p \subset G(\qp)$ hyperspecial and $K^p \subset G(\Afp)$ sufficiently small so that the PEL-type moduli problem of level $K$ is defined over $\cO_E \otimes \Z_{(p)}$ and the variety $S = S_K$ representing that moduli problem is smooth and quasi-projective. Pick an $E$-prime $\p$ above $p$ and consider the reduced variety $S_\p$. Let us give the definition of the basic, also called supersingular, Newton stratum of $S_\p$. Recall that the Newton strata of $S_\p$ are the loci in $S_\p$ where the isogeny class of the $p$-divisible group $\cA_x[p^\infty]$ with additional PEL-structures is constant. The isogeny class of $\cA_x[p^\infty]$ is determined by its rational Dieudonn\'e module ${\mathbb D}(\cA_x[p^\infty])_\Q$, and this Dieudonn\'e module is determined by its slope morphism $\nu$ \cite{MR809866}. If the centralizer of the slope morphism $\nu$ is $G$, then the isocrystal is called basic. Among the isocrystals ${\mathbb D}(\cA_x[p^\infty])_\Q$ for $x \in S(\lfq)$ there is one \emph{unique} basic isocrystal, and the locus in $S_\p$ corresponding to this isocrystal is the basic stratum $B \subset S_\p$. We work in this article under the condition that $B$ is \emph{finite}. This translates to an explicit condition on the signatures of the unitary group of similitudes $G_\R$; the only allowed signatures are $0$ and $1$, and the prime $p$ has to decompose in a certain way in the field $F^+$ (see Proposition~\ref{finitenesshyp} for the precise statement). In future work we hope to say more about the non-finite cases. 

Write $A$ for the free complex vector space on the set $B(\lfq)$. In the text we define an endomorphism $\Avg$ of $A$, in the same spirit as above for the modular curve, however its definition is slightly more involved because it has to take into account contribution from the center of $G$ and the different components of the Shimura variety (see below Equation~\eqref{glob}). Let $T_m$ be a sequence of Hecke operators on $G(\Af)$, satisfying two mild conditions (H1), (H2) (see Section~\ref{section:HeckeOperators}). Each Hecke operator $T_m$ defines a Hecke operator $\Psi(T_m)$ on the cocenter of the group $G$ (see Equation~\ref{psimapping}). To each operator $T_m$ we attach a certain norm $N(T_m)$ which is the sum of the absolute values of the coefficients appearing in the Satake transform of $T_m$ (see Section~\ref{section:HeckeOperators}). We prove:

\begin{theorem}
Let $v \in A$ be an element. There exists a constant $C \in \R_{>0}$ such that for any $\eps > 0$ there exist a natural number $M$ such that for all $m > M$ we have
$$
\left| \frac {T_m(v)}{\deg(T_m)} - \Psi(T_m)(\Avg(v)) \right| \leq C \cdot \left| \frac {N(T_m)}{\deg(T_m)}\right|. 
$$
\end{theorem}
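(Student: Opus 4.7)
The plan is to convert the geometric equidistribution problem into a spectral problem on an inner form of $G$. Thanks to the finiteness of the basic stratum (Proposition~\ref{finitenesshyp}), Kottwitz's description of the isogeny classes in the reduction of PEL Shimura varieties, combined with Rapoport--Zink uniformization, identifies
$$
B(\lfq) \;\cong\; I(\Q) \backslash I(\Af) / K,
$$
where $I/\Q$ is an inner form of $G$ with $I(\R)$ compact modulo the center and $I(\Afp) \cong G(\Afp)$. The Hecke operators $T_m$ at places away from $p$ act on $A = \C[B(\lfq)]$ through this identification, exactly as in the modular curve setting treated by Menares.

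Next, because $I(\R)$ is compact modulo center, the finite-dimensional space $A$ decomposes as a direct sum of $K$-invariant vectors in irreducible automorphic representations of $I(\Af)$. Writing
$$
v \;=\; v_0 + \sum_{\pi} v_\pi,
$$
with $v_0$ gathering the contributions of one-dimensional (i.e.\ abelian) automorphic representations and the $\pi$'s in the remaining sum non-one-dimensional, a central bookkeeping step is to verify that the endomorphism $\Avg$ --- whose definition involves contributions from the center of $G$ and from the connected components of $S$ --- is exactly the projector onto $v_0$, and that $T_m$ acts on $v_0$ through its image $\Psi(T_m)$ in the Hecke algebra of the cocenter. Granting this, the quantity $T_m(v)/\deg(T_m) - \Psi(T_m)(\Avg(v))$ is a finite sum of terms $\lambda_\pi(T_m)/\deg(T_m) \cdot v_\pi$, with $\lambda_\pi(T_m)$ the Hecke eigenvalue of $T_m$ on $\pi^K$.

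For the bound it suffices to show $|\lambda_\pi(T_m)| \le C_\pi \cdot N(T_m)$ for each non-one-dimensional $\pi$ appearing. This is where the Ramanujan conjecture enters: via the base change / Arthur--Mok--Shin machinery for unitary similitude groups, each such $\pi$ on $I$ transfers to a regular algebraic, conjugate self-dual, cuspidal automorphic representation $\Pi$ on $\GL_n$ over the CM field $F$, for which Ramanujan at unramified finite places is known. Estimating the Satake transform of $T_m$ term-by-term then produces the bound, with the $m^\eps$ slackness absorbing the finitely many places where $\pi$ is ramified; summing over the finite set of contributing $\pi$'s yields the theorem with a constant $C$ depending on $v$.

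The hardest part will be this last step --- verifying that the Ramanujan bound is actually applicable to the $\pi$ that arise on the inner form $I$. This is the reason for the specific hypotheses of the paper: $(G,X)$ of type A, $G_\R$ of signature $(n-1,1)$, and $p$ split in $\cK/\Q$, all chosen so that the target $\Pi$ on $\GL_n$ falls within the class covered by the known proof of Ramanujan via cohomology of compact unitary Shimura varieties. Conditions (H1) and (H2) on $\{T_m\}$ in turn ensure that the comparison of Satake transforms between $I$ and $G$, needed to rewrite the eigenvalue bound in terms of $N(T_m)$, remains uniform in $m$.
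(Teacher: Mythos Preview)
Your overall architecture matches the paper's: split $A$ into the piece spanned by one-dimensional automorphic representations and its complement, identify $\Avg$ with the projector onto the former, and bound the latter termwise via Ramanujan on $\GL_n$. The route to the spectral decomposition is different, however. The paper does not invoke Rapoport--Zink uniformization or the inner form $I$; it uses the character formula of Equation~\eqref{previoustheorem}, proved in~\cite[Thm.~3.13]{kret1}, which expresses $\Tr(f^p, A)$ directly as a sum over discrete automorphic representations $\pi$ of $G$ itself, with the crucial feature that every infinite-dimensional contributor has $\pi_p$ of \emph{Steinberg type}. From there the paper runs base change to $\Res_{\cK/\Q} G_\cK$ and Jacquet--Langlands to $\Gm \times \Res_{F/\Q}\GL_{n,F}$, landing on a discrete automorphic $\Pi$, and then invokes Ramanujan at the unramified places.

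The genuine gap in your outline sits at the step you correctly flag as hardest. You assert that each non-one-dimensional $\pi$ transfers to a \emph{cuspidal} $\Pi$ on $\GL_n$, but the Arthur--Mok or Jacquet--Langlands transfer lands a priori only in the discrete spectrum, and Ramanujan fails for non-cuspidal discrete representations (Speh representations are not tempered). In the paper's argument this is precisely where the Steinberg-type condition is spent: since $\pi_p$ is an unramified twist of Steinberg at each $\wp$ with $s_\wp = 1$, the local component $\Pi_p$ is essentially square integrable there, and the Moeglin--Waldspurger classification then forces $\Pi$ to be cuspidal. In your $I$-picture the analogous input would be that $I(\qp)$ contains a division-algebra factor at those $\wp$, so that local Jacquet--Langlands at $p$ produces a square-integrable component --- but you do not say this, and the hypotheses you list (type~A, signature, $p$ split in $\cK$) explain only why Ramanujan is \emph{known} for a cohomological conjugate-self-dual cuspidal $\Pi$, not why $\Pi$ is cuspidal to begin with. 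A minor point: the ``$m^\eps$ slackness absorbing the finitely many places where $\pi$ is ramified'' is not needed in this theorem, since condition~(H2) makes $T_{m,S} = \one_{K_S}$ identically; that slack belongs to the Corollary on the explicit operators $T_{r,m}$, not here.
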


This means that if the sequence $T_m$ is such that $\lim_{m \to \infty} \frac {N(T_m)}{\deg(T_m)} = 0$, then the sequence $T_m$ acts with equidistribution (this is called condition (H3) in the text below). We expect this to be the optimal result, in the sense that, if the condition (H3) is not satisfied, then equidistribution is false for the sequence $T_m$.

In Section~\ref{sect:example} we give a typical example of sequence $T_m$ to which the above theorem applies. In the beginning of the introduction we mentioned the sequence of Hecke operators $T_{r, m}$ for $\GL_n$ for which Clozel and Ullmo prove equidistribution in the classical setting. Over the extension $\cK/\Q$ our group $G$ becomes isomorphic to a general linear group (times a $\Gm$-factor) and we can define $T_{r,m}$ (for $G$) as the Hecke operator that is obtained from $T_{r,m}$ on $G_\cK$ via base change from $\cK$ to $\Q$ (see Equation~\eqref{theheckeoperator}). We then have

\begin{corollary}
Let $v \in A$ be an element. Then there exists a constant $C \in \R_{>0}$ such that for any $\eps > 0$ there exist an index $M$, such that for all square-free integers $m > M$ and all $r$ with $1 \leq r \leq n-1$ we have
$$
\left| \frac {T_{r, m}(v)}{\deg(T_{r, m})} - \Psi(T_{r, m})(\Avg(v)) \right| \leq C m^{\eps -[F:\Q] \tfrac {r(n-r)}2}. 
$$
\end{corollary}

How does one prove such analytical statements about varieties in characteristic $p$? The starting point is the formula of Kottwitz for the number of points on a Shimura variety of PEL type over a finite field (see the introduction of \cite{MR1124982}). Originally this formula is used to connect the cohomology of Shimura varieties with the theory of automorphic forms. Nowadays, that problem may be understood as the one of describing the $\ell$-adic cohomology of Shimura varieties as Hecke/Galois modules. In our work we restrict the formula of Kottwitz so that it counts only the contribution of the basic stratum. The resulting restricted formula can still be stabilized as in~\cite{MR1044820}, and the resulting stable formula can be compared with the geometric side of the trace formula. This yields a description of the $\ell$-adic cohomology of the basic stratum in terms of automorphic representations of the group $G$, or in endoscopic situations, automorphic representations of the endoscopic groups of $G$. In the article~\cite{kret1} we carried out this program for certain simple Shimura varieties, the ``Kottwitz varieties''. In the cases where the basic stratum $B \subset S_\p$ is finite we get a description of the free complex vector space on the set $B(\lfq)$ as Galois/Hecke module, in terms of automorphic representations of the group $G$ (we give the statement in Equation~\eqref{previoustheorem}). The automorphic description of the Hecke module $\textup{Map}(B(\lfq), \C)$ then gives the necessary analytical input to prove equidistribution.


\ 

{

\center{*}

}

\ 

Let us now discuss the structure of this article. The article is roughly cut into two parts. Part 1 spans Sections 1 to 4, in Section 5 we give an example, and Part 2 spans from Sections 6 to 8. In Part 1 we show that equidistribution is true in the basic stratum in case the Shimura variety $S$ is assumed on top of the above conditions, to be a \emph{Kottwitz variety} as considered in~\cite[\S1]{MR1163241}. The Kottwitz varieties $S$ are considerably more simple, for instance these varieties are projective and issues with endoscopy do not play a role. This greatly simplifies the trace formula for the group $G$. Section~4 contains the proof for equidistribution; we use our automorphic description of the cohomology of $B$ from~\cite[Thm~3.13]{kret1} to prove equidistribution.  In Part 2 we attack the problem of equidistribution for varieties which need not be projective and the group has endoscopy. Before we can establish equidistribution in this case, we need to work more. First we need to extend some results of the article~\cite{kret1} to the current, larger classes of endoscopic Shimura varieties. In Section~\ref{sect.extension} we extend the automorphic description of the cohomology of the basic stratum from~[\textit{loc. cit.}, Thm.~3.13] to a suitable statement valid for a substantially larger class of non-compact endoscopic varieties. We require, roughly, that of one the signatures of the unitary group is coprime with $n$ (Hypothesis~\ref{hyp.noendoc}). We verify that this condition is always satisfied in case the basic stratum is finite. The assumption forces the endoscopic contribution to the cohomology of the basic stratum to vanish (Lemma~\ref{prop.noendoscopy}). Such results could be of independent interest (see also the statements of Theorem~\ref{section7thm} and Proposition~\ref{finitenesshyp}). Finally, in Section~\ref{sect.final} we deduce the equidistribution statement for the endoscopic varieties.

\bigskip

\noindent \textbf{Acknowledgements:} I thank my thesis adviser Laurent Clozel for pointing out that Thm 3.13 from~\cite{kret1} will allow one to prove equidistribution. I also thank my second advisor Laurent Fargues, Ariyan Javanpeykar and finally I also thank Emmanuel Ullmo who indicated the article of Menares.

\setcounter{tocdepth}{1}
\tableofcontents

\section{Some simple Shimura varieties}\label{sect.first}

In the first part of this article we focus on the Shimura varieties as considered by Kottwitz~\cite{MR1163241}. These varieties are associated to a division algebra $D$ whose center is a CM field $F$. The algebra $D$ is equipped with an involution $*$ of the second kind, \ie $*$ induces complex conjugation on $F$. We embed $F$ into the complex numbers, and we assume the field $F$ splits into a compositum $F = \cK F^+$ of a quadratic imaginary number field $\cK \subset \C$ and a totally real number field $F^+$. For any commutative $\Q$-algebra $R$, the group $G(R)$ is by definition the group of elements $g \in (D \otimes_\Q R)^\times$ such that $gg^* \in R^\times$. The couple $(G, X)$ is then a Shimura datum of PEL type. If $K \subset G(\Af)$ is a compact open subgroup, sufficiently small, then we have a variety $S = S_K$ defined over the reflex field $E$, the variety $S$ represents a moduli problem of PEL type, see~\cite[\S5]{MR1124982}. Let $\p$ be an $E$-prime where the variety $S$ has good reduction in the sense defined by Kottwitz~\cite[\S5]{MR1124982}. In particular $S$ extends to a smooth projective $\cO_{E_\p}$-scheme. We write $\Fq$ for the residue field of $E$ at $\p$, and we put $S_\p := S \otimes \Fq$. Let $p$ be the rational prime number under $\p$. We fix an embedding $\nu_p \colon E \to \lqp$ compatible with $\p$. Throughout this article we assume that the prime number $p$ is split in the field $\cK$. 

\newcommand{\Auniv}{\cA^{\textup{univ}}}
\renewcommand{\D}{{\mathbb D}}

We now define the basic locus $B$ of $S_\p$. 
We introduce the following notations:
\begin{itemize}
\item write $\Auniv$ for the universal Abelian variety over $S$ and $\lambda, i, \li \eta$ for its additional PEL type structures~\cite[\S6]{MR1124982};
\item $L$ is the completion of the maximal unramified extension of $\qp$ contained in $\lqp$;
\item for $\alpha > 0$  positive integer, write $E_{\p, \alpha} \subset \lql$ for the unramified extension of degree $\alpha$ of $E_\p$, and write $\fqa$ for the residue field of $E_{\p, \alpha}$; 
\item $\sigma$ is the arithmetic Frobenius of $L/\qp$;
\item $V$ is the $D^\textup{opp}$-module with space $D$ where an element $d \in D^{\opp}$ acts on the left through multiplication on the right on the space $D$.
\end{itemize}
Let $x \in S(\lfq)$ be a point. The couple $(\lambda, i)$ defined on the fibre $\Auniv_x$ induces via the functor $\D(\square)\otimes L$ additional structures on the isocrystal $\D(\Auniv_x)_L$. There exists an isomorphism $\varphi \colon V \otimes L \isomto \D(\cA_{K,x})_L$ of skew-Hermitian $B$-modules~\cite[p.~430]{MR1124982}, and via this isomorphism we can send the crystalline Frobenius on $\D(\cA_{K,x})_L$ to a $\sigma$-linear operator acting on $V_L$. This operator on $V \otimes L$ may be written in the form $\delta \cdot (\id_V \otimes \sigma)$ with $\delta \in G(L)$ independent of $\varphi$ up to $\sigma$-conjugacy. Let $B(G_{\qp})$ be the set of all $\sigma$-conjugacy classes in $G(L)$ (cf.~\cite{MR809866}). We have constructed a mapping $\xi_p \colon S(\lfq) \to B(G_{\qp}), x \mapsto \delta$. The fibres of the map $\xi_p$ are the \emph{Newton strata} of $S_\p$. The \emph{basic stratum} is one particular Newton stratum, it is defined as follows. To any $\delta \in G(L)$, one may attach a \emph{slope morphism} $\nu_\delta$ \cite[4.2]{MR809866}. Let $\D$ be the diagonalizable pro-torus over $\qp$ whose character group is $\Q$. Then $\nu_\delta$ is a morphism $\D \to G_{\qp}$. The element $\delta$ is called \emph{basic} if $\nu_\delta$ has image inside the center of $G_{\qp}$. The image of $\xi_p$ in $B(G_{\qp})$ is finite and it contains exactly one basic element $b$ (nowadays, the image ${\textup{Im}}(\xi_p)$ is known by~\cite{viehmann}). The subset $\xi_p^{-1}(b) \subset S(\lfq)$ is closed for the Zariski topology, and one equips it with the reduced subscheme structure to make it into a scheme: This is the \emph{basic stratum} $B \subset S_\p$. The variety $B$ is defined over $\fq$ and projective, but not smooth: In general one expects that it has  many singularities. In this article we will work in the case where $B$ is \emph{assumed} to be finite, in that case $B$ \emph{is} smooth. 

In fact, among the set of all Kottwitz varieties, it is rarely true that $B$ is finite. However, the class of Kottwitz varieties with finite basic stratum is still quite interesting: It contains all the varieties considered by Harris and Taylor to prove the local Langlands conjecture in their book~\cite{MR1876802}. The condition that $B$ is finite corresponds to a condition on the signatures of the unitary group at infinity and the decomposition of the prime number $p$ in the field $F^+$. More explicitly, let $U \subset G$ be the subgroup of elements with trivial factor of similitudes. Then $U(\R)$ is isomorphic to a product of real, standard unitary groups $U(s_v, n - s_v)$, where $v$ ranges over the infinite $F^+$-places. We may, and do, assume that $s_v \leq \tfrac 12 n$. The field $F^+$ is embedded into $\li \Q \subset \C$ and also in $\li \Q_p$, and therefore the group $\Gal(\lqp/\qp)$ acts on the set of infinite $F^+$-places. 

\begin{lemma}
The variety $B/\fq$ is finite if and only if the following two conditions hold:
\begin{itemize}
\item[(\textit{i})] for all $v$ we have $s_v \in \{0, 1\}$;
\item[(\textit{ii})] for each $\Gal(\lqp/\qp)$-orbit $\wp$ of infinite $F^+$-places, we have $s_v = 1$ for at most one $v \in \wp$. 
\end{itemize}
\end{lemma}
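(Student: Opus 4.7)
The plan is to translate finiteness of $B$ into a local condition on the Rapoport-Zink formal scheme at $p$, factor the local datum according to the $p$-adic places of $F^+$, and classify exactly which signature tuples yield a discrete reduced fibre on each factor. First, I would invoke the $p$-adic uniformization of the basic locus: there is a bijection
\[
B(\lfq) \;\cong\; I(\Q) \big\backslash \big( \mathcal{M}_{G,b,\mu}^{\textup{red}}(\lfq) \times G(\Afp)/K^p \big),
\]
with $\mathcal{M}_{G,b,\mu}$ the Rapoport-Zink formal scheme attached to the local datum $(G_{\qp}, b, \mu)$ and $I$ the inner form of $G$ associated to $b$. Since $K^p$ is compact-open, $I(\Q)\backslash G(\Afp)/K^p$ is finite, and $I(\Q)$ acts on $\mathcal{M}_{G,b,\mu}^{\textup{red}}(\lfq)$ with cocompact quotient, $B$ is finite if and only if the reduced underlying scheme $\mathcal{M}_{G,b,\mu}^{\textup{red}}$ is zero-dimensional.

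Because $p$ splits in $\cK/\Q$, the PEL datum yields a splitting
\[
G_{\qp} \;\cong\; \Gm \,\times\, \prod_{\mathfrak{p}\mid p} \Res_{F^+_\mathfrak{p}/\qp}\Gl_n,
\]
where $\mathfrak{p}$ runs over the $p$-adic places of $F^+$. Via the embedding $\nu_p$, the set $\textup{Hom}_{\qp}(F^+_\mathfrak{p}, \lqp)$ is identified with a $\Gal(\lqp/\qp)$-orbit $\wp_\mathfrak{p}$ of infinite $F^+$-places, and the Hodge cocharacter $\mu_\mathfrak{p}$ on the component of the $\mathfrak{p}$-factor indexed by $v \in \wp_\mathfrak{p}$ is the minuscule cocharacter $(1^{s_v}, 0^{n-s_v})$. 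Both $b$ and $\mathcal{M}_{G,b,\mu}$ factor accordingly, so the question reduces to: for each $\mathfrak{p}$, when is $\mathcal{M}_{G_\mathfrak{p}, b_\mathfrak{p}, \mu_\mathfrak{p}}^{\textup{red}}$ zero-dimensional?

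The heart of the proof is this local classification. I would claim that, after the normalization $s_v \leq n/2$, the reduced fibre is $0$-dimensional iff either all $s_v = 0$ on $\wp_\mathfrak{p}$, or exactly one $s_v$ equals $1$ and the rest vanish. In the trivial case the basic $p$-divisible $\cO_{F^+_\mathfrak{p}}$-module is \'etale and the Rapoport-Zink space is rigid. In the ``Lubin-Tate'' case the basic module is a Lubin-Tate formal $\cO_{F^+_\mathfrak{p}}$-module and its Rapoport-Zink tower is the usual Lubin-Tate tower, whose reduced special fibre is discrete; this is the setting of~\cite{MR1876802}. In any other configuration on $\wp_\mathfrak{p}$ (two or more $s_v \geq 1$, or some $s_v$ with $2 \leq s_v \leq n/2$), one can construct an explicit one-parameter family of Dieudonn\'e lattices inside the basic isocrystal preserving the $\cO_{F^+_\mathfrak{p}}$-action and the signature conditions, giving positive reduced dimension; alternatively, the G\"ortz-He-Nie dimension formula for affine Deligne-Lusztig varieties yields a strictly positive contribution outside the two cases above.

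Combining the local classification over all $\mathfrak{p}$ gives precisely conditions (i) and (ii). The main obstacle is the local step: one needs both to recognise the (twisted) Lubin-Tate situation as the only nontrivial case with discrete reduced fibre, and to exhibit positive-dimensional families of Dieudonn\'e lattices in every remaining configuration.
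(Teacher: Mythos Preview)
Your approach is sound but takes a genuinely different route from the paper. The paper gives no argument at all here: it simply cites \cite[\S4.3]{kret2}. For the analogous statement in the endoscopic setting (Proposition~\ref{finitenesshyp}), the paper argues \emph{automorphically}: it shows that the number of $\fqa$-points of $B$ equals a sum over automorphic representations weighted by the polynomial $P(q^\alpha)$, and that this polynomial is identically~$1$ precisely under the signature condition $s_\wp \in \{0,1\}$; positivity of the degree of $P$ otherwise is read off from the explicit compact trace against the Steinberg representation. Your route is geometric: reduce via $p$-adic uniformization to the vanishing of $\dim \mathcal{M}_{G,b,\mu}^{\textup{red}}$, factor over the places $\wp\mid p$, and classify the zero-dimensional factors as exactly the \'etale and Lubin--Tate cases. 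This is the natural Rapoport--Zink argument and has the advantage of being purely local at~$p$, independent of any global spectral input; the automorphic argument, by contrast, plugs directly into the point-counting machinery already set up in the paper. The one place where your sketch is thin is the ``any other configuration'' step: asserting that one can ``construct an explicit one-parameter family of Dieudonn\'e lattices'' or invoke a dimension formula is correct in outcome, but you should either write down the relevant affine Deligne--Lusztig dimension (for $\Res_{F^+_\wp/\qp}\Gl_n$ with basic $b$ and minuscule $\mu$ of type $(s_v)_{v\in\wp}$, the reduced dimension is $\sum_{v\in\wp} s_v(n-s_v)$ minus a defect term, which vanishes exactly in your two cases) or give the lattice family concretely; as written, that step is an appeal rather than an argument.
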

\begin{proof}
This follows (for example) from~\cite[\S4.3]{kret2}.
\end{proof}

From this point onwards we assume that the conditions (\textit{i}) and (\textit{ii}) in the Lemma above are satisfied for our Shimura datum $(G, X)$. Via the embeddings $\li \Q \supset F^+ \subset \lqp$, any $F^+$-place $\wp$ above $p$ defines a $\Gal(\lqp/\qp)$-orbit of embeddings $F^+ \hookrightarrow \li \Q$. We define numbers $s_\wp$ by $s_\wp := \sum_{v \in \wp} s_v$ for each $\wp |p$. Due to our assumption that (\textit{i}) and (\textit{ii}) hold we have $s_\wp \in \{0, 1\}$ for each $\wp$.

Let $A$ be the free complex vector space on the set $B(\lfq)$. 
Let $\cH(G(\Af))$ be the convolution algebra of compactly supported complex valued functions on $G(\Af)$, and write $\cH(G(\Af)//K)$ for the algebra of functions that are $K$-invariant under left and right translations. The Hecke algebra $\cH(G(\Af)//K)$ acts on the variety $B$ through correspondences and on the vector space $A$ via endomorphisms. Let $f_\infty$ be a function at infinity whose stable orbital integrals are prescribed by the identities of Kottwitz in \cite[p.~182, Eq.~(7.4)]{MR1044820}; it can be taken to be (essentially) an Euler-Poincar\'e function \cite[p.~657, Lemma~3.2]{MR1163241} (cf. \cite{MR794744}). The function has the following property: Let $\pi_\infty$ be an $(\ig, K_\infty)$-module occurring as the component at infinity of an automorphic representation $\pi$ of $G$. Then the trace of $f_\infty$ against $\pi_\infty$ equals the Euler-Poincar\'e characteristic $\sum_{i=0}^\infty N_\infty (-1)^i \dim \uH^i(\ig, K_\infty; \pi_\infty \otimes \xi)$, where $N_\infty$ is a certain explicit constant (cf. \cite[Lemma 3.2]{MR1163241}). We use the following space of automorphic forms: $\cA(G) = L^2(G(\Q) \backslash G(\A), \omega)$, where $\omega$ is the trivial central character. In the article~\cite[Thm.~3.13]{kret1} we proved that for every Hecke operator $f^p \in \cH(G(\Af^p)//K)$ we have:
\begin{equation}\label{previoustheorem}
\Tr(f^p \otimes \one_{K_p}, A) = \eps \sum_{\pi \subset \cA(G), \pi_p \textup{ Steinberg type}} \Tr(f_\infty f, \pi^p) + \sum_{\pi \subset \cA(G), \ \dim(\pi) = 1} \Tr(f_\infty f, \pi^p), 
\end{equation}
where the sign $\eps$ equals $(-1)^{t(n-1)}$ with $t$ the number of infinite $F^+$-places $v$ such that $p_v = 1$. ``Steinberg type'' means the following (cf.~\cite[Def.~4]{kret1}): Recall that the group $G(\qp)$ is isomorphic to ${\mathbb Q}_p^\times \times \prod_\wp \GL_n(F^+_\wp)$, where the product ranges over the $F^+$-places $\wp$ dividing $p$. Any representation $\pi_p$ of $G(\qp)$ thus breaks up into a tensor product $\pi_p \cong \otimes_{\wp} \pi_\wp$ of representations $\pi_\wp$ of the groups $\Gl_n(F^+_\wp)$. A smooth representation $\pi_p$ of $G(\qp)$ is of \emph{Steinberg type} if, (\textbf{1}) The factor of similitudes $\qp^\times$ of $G(\qp)$ acts through an unramified character on the space of $\pi_p$, and (\textbf{2}) For all $F^+$-places $\wp$ above $p$, the representation $\pi_\wp$ of $\Gl_n(F_\wp^+)$, is a generic unramified representation if $s_\wp = 0$ or (B) the twist of the Steinberg representation by an unramified characters if $s_\wp = 0$.

\begin{remark}
The module $A$ is semi-simple as $\cH(G(\Af^p))$-module because all its irreducible subquotients lie in the discrete spectrum of $G$.
\end{remark}

We will use the result in Equation~\eqref{previoustheorem} to deduce an equidistribution statement of Hecke operators acting on the basic stratum $B(\lfq)$.

\section{Hecke operators}\label{section:HeckeOperators}

Let $\{T_m\}_{m \in \N} \in \cH(G(\Af))$ be a sequence of Hecke operators such that
\begin{itemize}
\item[(\textbf{H1})]  $T_m = \bigotimes_v T_{m, v}$ is an elementary tensor of local Hecke operators. 
\item[(\textbf{H2})]  $T_{m, S} = \one_{K_S}$ for all $m \in \N$, where $S$ is a finite set outside which the data $(G, K)$ is unramified, $K$ decomposes into $K = K_S K^S$, $K_S \subset G(\A_S)$ compact open and $K^S \subset G(\Af^S)$ hyperspecial. 
\end{itemize}
(The first condition is only there for notational convenience.) 

There is an important additional condition on the sequence $\{T_m\}_{m \in \N}$, before we can prove equidistribution. We define $\deg(T_m) := \Tr(T_m, \one)$. We define a number $N(T_m) \in \R_{\geq 0}$, as follows. Let $\ell$ be a prime number coprime to $S$. Let $T_{m, \ell}$ be the $\ell$-th component of $T_m$. Let $T_\ell$ be a maximal split torus in $G_{\ql}$ with rational Weyl group $W_\ell$, and let $\sum_{\nu \in X_*(T_\ell)} c_{\ell, \nu} \cdot [\nu] \in \C[X_*(T_\ell)]^{W_\ell}$ be the Satake transform of $T_m$. We define $N_\ell(T_{m, \ell}) := \sum_{\nu \in X_*(T)} |c_{\ell, \nu}|$, and $N(T_m) := \prod_{\ell \notin S} N_\ell(T_{m, \ell})$. We require on the sequence $T_m$ that
\begin{equation}\label{condition_H3}
\tag{\textbf{H3}}
\lim_{m \to \infty} \frac {N(T_m)}{\deg(T_m)} = 0, 
\end{equation}
(cf. Equation~\eqref{intheproofofthemaintheorem}). In Section~\ref{sect:example} we give an example of a sequence of Hecke operators satisfying conditions (H1), (H2) and (H3).

We define the Hecke algebra $\cT \subset C_c^\infty(G(\Af))$ to be the complex algebra generated by the operators $T_m$. The algebra $\cT$ is commutative.

\section{Hecke orbits}

The Hecke algebra $\cT$ does not act transitively on the basic stratum; there are two innocent obstructions: (1) an obstruction from the cocenter of the group $G$, and (2) the Hasse invariant $\Ker^1(G, \Q)$, which need not be trivial. Since the action of $\cT$ on $B(\lfq)$ is not transitive, the set $B(\lfq)$ breaks up into orbits. In this section we define explicitly certain orbits inside $B(\lfq)$. We will then show in our main theorem that the algebra $\cT$ acts transitively and with equidistribution on these explicitly defined orbits.

The obstructions (1) and (2) are what one expects: For the first obstruction (1): If the image of $K \subset G(\Af)$ in the cocenter $C(\Af)$ is not sufficiently large (and this will always be the case for many $C$, due to the presence of Abelian class groups), then the double coset space $G(\Q) \backslash (X \times G(\Af)/K)$ is not connected. A point in one connected component will be sent by a Hecke operator to another connected component only if this operator is non-trivial on the cocenter. The second obstruction (2) is there because $S(\C)$ is \emph{not} equal to the double coset space $G(\Q) \backslash (X \times G(\Af)/K)$. To explain this, we need a little more notation: We write $\Sh(G, K)$ for the variety associated to the Shimura datum $(G, X)$ (and the group $K$), as defined by Deligne in his Corvallis article~\cite{MR0498581}. Thus, 
$$
\Sh(G, K)(\C) = G(\Q) \backslash (X \times G(\Af)/K). 
$$ 
Then $S(\C)$ is the disjoint union
\begin{equation}\label{decomposition}
S(\C) = \coprod_{G'} \Sh(G', K) 
\end{equation}
ranging over a set of representatives of the set of isomorphism classes of $\Q$-reductive groups $G'$, such that $G$ and $G'$ are isomorphic locally at all places. The latter index set is equal to the kernel $\Ker^1(G: \Q)$.  
$$
\Ker^1(G: \Q) := \Ker \lhk H^1(\Q, G) \to \prod_v H^1(\Q_v, G) \rhk.
$$
The group $\Ker^1(G:\Q)$ group depends only on the cocenter of $G$, and is trivial in case $n$ is even~\cite[p.~393]{MR1124982}. The Hecke correspondences act on the right hand side via their natural action on the double coset spaces. Thus all points in a Hecke orbit will have the same invariant in $\Ker^1(G:\Q)$. 

Let $d \colon G \surjects C$ be the cocenter of the group $G$. We have the morphism $h$ from Deligne's torus ${\mathbb S} := \Res_{\C/\R} {\mathbb G}_{\textup {m}}$ to $G_\R$. By composing this morphism with $d \otimes \R$ we obtain a morphism $h' \colon {\mathbb S} \to C_\R$. The couple $(C, \{h'\})$ is a zero dimensional Shimura datum. The finite Shimura variety $\Sh(C, \{h'\}, d(K))$ parametrizes the connected components of the original variety~\cite[\S2.6]{MR0498581} , \ie the natural morphism
\begin{equation}\label{deligne}
\pi_0(G(\Q) \backslash (X \times G(\Af)/K)) \to C(\Q) \backslash ( \{h\} \times C(\Af)/d(K)), 
\end{equation}
is an isomorphism. Via this mapping, the action of the Hecke operator $f \in \cH(G(\Af))$ on the left hand side coincides with the action of the operator $\Psi(f)$ in $\cH(C(\Af))$ on the right hand side. Here the map $\Psi \colon \cH(G(\Af)) \to \cH(C(\Af))$ is characterized by the condition that, for all $c \in C(\Af)$, and for all $f \in \cH(G(\Af))$ we have:
\begin{equation}\label{psimapping}
\left[ \Psi f \right](c) = \begin{cases} \int_{G_\der(\Af)} f(gh) \dd \mu(h) & \textup{if } c = \li g \in \textup{Im}(G(\Af) \rightarrow C(\Af)) \cr 
0 & \textup{otherwise,}
\end{cases}
\end{equation} 
where the Haar measure on $G_{\der}(\Af)$ is the one giving the group $K \cap G_{\der}(\Af)$ volume $1$. The mapping in Equation~\eqref{deligne} is $\Aut(\C/E)$-equivariant and descents to an isomorphism of $E$-schemes $\pi_0(\Sh(G:K) ) \isomto \Sh(C:d(K))$. The variety $S$ is an union of $\# \Ker^1(G, \Q)$ copies of the variety $\Sh(G:K)$ \cite[\S6]{MR1124982}. We obtain an $E$-isomorphism
\begin{equation}\label{glob}
\pi_0(S) \isomto \coprod_{\Ker^1(G, \Q)} \Sh(C:d(K)). 
\end{equation}
Both sides are finite \'etale $E$-schemes and the $\Gal(\li \Q/E)$-action is unramified at $\p$. Locally at the prime $\p$ we have a natural model of $S$ over the ring of integers $\cO_{E_\p}$, and we construct a model of the right hand side in the straightforward manner: Take the ring of global sections $W$ of the scheme $\coprod_{\Ker^1(G, \Q)} \Sh(C:d(K))_{E_\p}$. Then $W$ is a $\qp$-algebra; let $W^\circ \subset W$ be the integral closure of $\zp$ in $W$. Then $\spec(W^\circ)$ is our integral model. We write $Y = \spec(W^\circ)$ and view it as a as scheme over $\cO_{E, \p}$. We reduce the map in Equation~\eqref{glob} modulo $\p$ and compose with $B \subset S_\p \surjects \pi_0(S_\p)$ to obtain the map $\psi \colon B \to Y$. Let $\Avg \colon A \to A$ be the mapping taking the `average' of an element $v \in A$ along the fibres of the mapping $\psi \colon B \to Y$. More precisely, $\Avg$ is defined as follows. For each point $y \in Y$ we have the fibre $B_y$ of $\psi$ above $y$. Define $A_y$ to be the free complex vector space on the set $B_y(\lfq)$. Then $A$ is the direct sum of the $A_y$ with $y$ ranging over the set $Y(\lfq)$. For each $y \in Y$ we have the map (of vector spaces): $\Psi_y \colon A_y \to \C$, defined by $\sum_{x \in B_y(\lfq)} a_x \cdot [x] \mapsto \sum_{x \in B_y(\lfq)} a_x$. Write $E_y = \sum_{x \in B_y(\lfq)} [x] \in A_y$. Define the endomorphism $\Avg_y \colon A_y \to A_y$, by $v \mapsto \frac {\Psi_y(v)}{\#B_y(\lfq)} \cdot E_y$. Take the direct sum of $\Avg_v$ over all $y \in Y$ to obtain the endomorphism $\Avg \colon A \to A$. We will prove that any element $v \in A$ converges to its average $\Avg(v)$ under the action of the sequence of Hecke operators $T_m \in \cT$. The complex vector space $A$ is finite dimensional and therefore carries a norm $|\cdot |$, uniquely defined up to equivalence of norms. Using this norm we may give the statement of the main theorem:

\begin{theorem}\label{maintheorem}
Let $T_m$ be a sequence of Hecke operators satisfying conditions \textup{(H1)} and \textup{(H2)}. Let $v \in A$ be an element. There exists a constant $C \in \R_{>0}$ such that for any $\eps > 0$ there exist a natural number $M$ such that for all $m > M$ we have
$$
\left| \frac {T_m(v)}{\deg(T_m)} - \Psi(T_m)(\Avg(v)) \right| \leq C \cdot \left| \frac {N(T_m)}{\deg(T_m)}\right|. 
$$
\end{theorem}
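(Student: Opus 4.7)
The plan is to use the automorphic description of $A$ provided by Equation~\eqref{previoustheorem} to split $A$ into two Hecke-stable pieces, on each of which the action of $T_m$ can be controlled. By the remark following~\eqref{previoustheorem} the module $A$ is semisimple over $\cH(G(\Af^p)//K^p)$, and the trace identity expresses its character as a sum of two kinds of automorphic contributions. I thus obtain a decomposition $A = A_1 \oplus A_{\mathrm{St}}$, where $A_1$ collects the contributions from one-dimensional automorphic representations of $G$, and $A_{\mathrm{St}}$ collects the contributions from $\pi$'s with $\pi_p$ of Steinberg type.

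First I identify $A_1$ with $\mathrm{Im}(\Avg)$: one-dimensional automorphic representations of $G$ factor through $d\colon G\twoheadrightarrow C$, so they are exactly the automorphic characters of $C$, and via the map $\psi\colon B\to Y$ coming from~\eqref{decomposition}--\eqref{glob} they account precisely for the functions constant along the fibres of $\psi$, that is, for $\bigoplus_{y\in Y(\lfq)}\C\cdot E_y$. Formulas~\eqref{deligne} and~\eqref{psimapping} then show that $T_m$ acts on $A_1$ through $\Psi(T_m)$ on $Y$, which together with the normalization built into the definition of $\Avg$ yields the identity $T_m(\Avg(v))/\deg(T_m) = \Psi(T_m)(\Avg(v))$ for every $v \in A$. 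For the complementary piece $A_{\mathrm{St}} = \ker(\Avg)$, I bound the Hecke eigenvalues by Ramanujan. Each $\pi$ contributing to $A_{\mathrm{St}}$ is cuspidal because $\pi_p$ contains a twist of Steinberg (at any $\wp\mid p$ with $s_\wp = 1$), and its base change to $\GL_n/F$ is a regular algebraic, essentially conjugate self-dual, cuspidal automorphic representation; for such representations Ramanujan is known (Clozel, Harris--Taylor, Shin, Caraiani). Hence at every $\ell\notin S$ the unramified local component $\pi_\ell$ is tempered, so $|\nu(s(\pi_\ell))|=1$ for every $\nu\in X_*(T_\ell)$, and the eigenvalue by which $T_{m,\ell}$ acts on $\pi_\ell^{K_\ell}$ has absolute value at most $\sum_\nu|c_{\ell,\nu}| = N_\ell(T_{m,\ell})$. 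Taking the product over all $\ell\notin S$ (the $S$-component is the identity by~\textup{(H2)}) yields a uniform bound $|T_m w|\leq C\cdot N(T_m)\cdot|w|$ for $w \in A_{\mathrm{St}}$, with $C$ depending only on the finitely many $\pi$ appearing in $A_{\mathrm{St}}$ and on the fixed norm on $A$.

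To conclude, I write $v = \Avg(v) + (v-\Avg(v)) \in A_1\oplus A_{\mathrm{St}}$ and combine the two bounds:
$$\left|\frac{T_m(v)}{\deg(T_m)} - \Psi(T_m)(\Avg(v))\right| = \left|\frac{T_m(v-\Avg(v))}{\deg(T_m)}\right| \leq C\,|v|\cdot\frac{N(T_m)}{\deg(T_m)}.$$
The hard step is the Ramanujan input: one must verify that every $\pi$ which can possibly contribute to $A_{\mathrm{St}}$ lies in the class for which Ramanujan over $\GL_n$ is currently known, and in particular that no residual or endoscopic contributions survive. This is automatic in the Kottwitz-variety setting of Part~1 (where~\eqref{previoustheorem} is imported from~\cite{kret1}), but in the general setting of Part~2 it requires both extending that automorphic description and ruling out endoscopy---the work undertaken via Hypothesis~\ref{hyp.noendoc} and Lemma~\ref{prop.noendoscopy}.
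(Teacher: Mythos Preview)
Your proposal is correct and follows essentially the same route as the paper: decompose $A$ into the piece coming from one-dimensional automorphic representations (identified with $\mathrm{Im}(\Avg)$, on which $T_m$ acts through $\Psi(T_m)$ via~\eqref{deligne}--\eqref{glob}) and the piece coming from the infinite-dimensional (Steinberg-type) contributions, then bound the latter using temperedness obtained from base change/Jacquet--Langlands to $\GL_n$ and the known cases of Ramanujan. The paper packages the second step as a separate Proposition~\ref{mainprop} and makes explicit the chain $\pi \rightsquigarrow BC(\pi) \rightsquigarrow \Pi = JL(BC(\pi))$ together with the Moeglin--Waldspurger argument forcing $\Pi$ to be cuspidal, but your compressed version captures the same content.
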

\begin{proof}
In Section~\ref{secbeginproof} we prove Theorem~\ref{maintheorem}. 
\end{proof}

Now assume additionally that the sequence $T_m$ satisfies condition (H3) saying that $\lim_{m \to \infty} \frac {N(T_m)}{\deg(T_m)} = 0$. Then Theorem~\ref{maintheorem} gives an equidistribution statement for the sequence $T_m$ acting on the basic stratum. In Section~\ref{sect:example} we explain what Theorem~\ref{maintheorem} says for a concrete example.

\begin{remark} 
Theorem~\ref{maintheorem} gives a bound on the rate of convergence: The sequence $\frac {T_m(v)}{\deg(T_m)}$ convergences to the sequence $\Psi(T_m)(\Avg(v))$ with at least the speed of convergence of the limit $\lim_{m \to \infty} \frac {N(T_m)}{\deg(T_m)} = 0$. This convergence bound is the optimal bound coming from the Ramanujan conjecture.
\end{remark}

Let us discuss in some more detail the statement of Theorem~\ref{maintheorem}. Strictly speaking, Theorem~\ref{maintheorem} does not say that the sequence $(T_m(v) / \deg(T_ m))$ converges; its says that the terms of this sequence get arbitrarily close to the terms of the sequence $\Psi(T_m)(\Avg(v))$. If one chooses a sequence of operators $T_m$ such that the operators $\Psi(T_m)$ are non-trivial for an infinite amount of times, then the sequence $\Psi(T_m)(\Avg(v))$ does not converge; it will vary over a finite set of values. Thus, in those cases, our theorem actually says that the limit $\lim_{m \to \infty} (T_m(v) / \deg(T_m))$ does not exist at all. This behavior is completely normal and understandable: Already in the classic, complex setting, equidistribution is false (in the strict sense) when the group has non-trivial cocenter. To convince the reader, we consider a Shimura variety for the multiplicative group ${\mathbb G}_{\textup{m}}$: Consider the set $S_N := (\Z/N\Z)^\times$ viewed as a variety over $\Q$ by identifying $S_N$ with the $N$-th primitive roots of unity $\mu_N^\times$. A natural sequence of Hecke operators acting on this variety is the sequence of operators $T_p$ induced by the $p$-th Frobenius element acting on $\mu_N^\times(\li \Q)$ ($p$ not dividing $N$). The operator $T_p$ acts on the complex valued functions $f$ on $S_N$ by multiplication: $T_p \cdot f := (x \mapsto f(px))$. Clearly, the limit $\lim_{p \to \infty} T_p(f)$ does not exist in general, only if one imposes a condition like $p \equiv 1 \mod N$.

Thus, returning to our original Shimura datum and our original sequence $T_m$, one can do two things to get equidistribution in the strict sense:
\begin{itemize}
\item Restrict the sequence of Hecke operators to range only over those $T_m$ acting trivially on the cocenter (\ie $\Psi(T_m)$ acts by multiplication with $\deg(T_m)$), yielding certain congruence conditions on the numbers $m$;
\item Restrict the space of vectors $v \in V$, and consider only those $v$ such that the Hecke algebra acts by multiplication with its degree on $\Avg(v)$. 
\end{itemize}
We choose to keep the slightly more complicated statement of convergence involving several limit points. 

\section{Equidistribution for Kottwitz varieties}\label{secbeginproof}

We prove Theorem~\ref{maintheorem}.

\begin{definition}
The character formula for $A$ in Equation~\eqref{previoustheorem} expresses $A$ as a sum of Hecke modules of the form $(\pi_\ff^p)^{K^p}$. We define $A_0 \subset A$ to be the $\cT$-submodule generated by the submodules $(\pi_\ff^p)^{K^p} \subset A$ for $\pi$ an infinite dimensional automorphic representation of $G(\A)$. 
\end{definition}

The following proposition proves the essential part of Theorem~\ref{maintheorem}:

\begin{proposition}\label{mainprop}
Let $v \in A_0$. Let $T_m \in \cH(G(\Af))$ be a sequence of Hecke operators satisfying conditions \textup{(H1)} and \textup{(H2)}. There exists a constant $C \in \R_{> 0}$ such that for all natural numbers $m$ we have
$$
\left| \frac {T_m(v)}{\deg(T_m)} \right| \leq C \cdot \left| \frac {N(T_m)} {\deg(T_m)} \right|.
$$
\end{proposition}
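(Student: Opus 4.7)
The plan is to use the automorphic description of $A$ in Equation~\eqref{previoustheorem} to reduce the bound to a statement about Satake parameters of the automorphic representations contributing to $A_0$, and then to invoke the Ramanujan conjecture for cohomological cuspidal representations of $\GL_n$ over a CM field.

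First I would use the semi-simplicity of $A$ as an $\cH(G(\Af^p))$-module, together with Equation~\eqref{previoustheorem}, to decompose $A_0$ into a \emph{finite} direct sum of $\pi_\ff^p$-isotypic pieces, indexed by the automorphic representations $\pi \subset \cA(G)$ with $\pi_p$ of Steinberg type and $(\pi_\ff^p)^{K^p} \ne 0$. Since the sum is finite, it suffices to prove the bound on each summand with a uniform constant. For such a $\pi$ and $v \in (\pi_\ff^p)^{K^p}$, conditions (H1)--(H2) ensure that $T_{m, S}$ acts by the fixed idempotent $\one_{K_S}$ on the finite-dimensional space $\pi_S^{K_S}$, while for each $\ell \notin S$ the spherical element $T_{m, \ell}$ acts on $v$ by the scalar $\widehat T_{m, \ell}(s_\ell(\pi))$, where $s_\ell(\pi) \in T_\ell/W_\ell$ is the Satake parameter of $\pi_\ell$ and $\widehat T_{m, \ell} = \sum_{\nu} c_{\ell, \nu}[\nu]$ is its Satake transform. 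This gives
$$
|T_m v| \;\leq\; C_0 \,|v|\, \prod_{\ell \notin S} \sum_{\nu \in X_*(T_\ell)} |c_{\ell, \nu}| \cdot |\nu(s_\ell(\pi))|,
$$
where $C_0$ depends only on $\pi_S$ and $K_S$ and is independent of $m$.

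The next step is to control $|\nu(s_\ell(\pi))|$ uniformly in $\ell$. Because $f_\infty$ is an Euler--Poincar\'e function, any $\pi$ contributing to $A_0$ is cohomological at infinity; by construction $\pi_p$ is of Steinberg type. Base change from the unitary similitude group $G$ to $\GL_n$ over the CM field $F$ produces an isobaric automorphic representation $\Pi$ of $\GL_n(\A_F)$. The Steinberg condition at a place of $F^+$ above $p$ forces the local base change there to be essentially square-integrable, which in turn forces $\Pi$ to be cuspidal (a non-trivial isobaric sum cannot be locally square-integrable). The representation $\Pi$ is then a cohomological cuspidal representation of $\GL_n$ over a CM field, for which the Ramanujan conjecture is known by the work of Harris--Taylor~\cite{MR1876802} and its extensions. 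Consequently $|\nu(s_\ell(\pi))| = 1$ for every cocharacter $\nu$ and every $\ell \notin S$, so the previous inequality specializes to $|T_m v| \leq C_0 \, |v| \cdot N(T_m)$. Summing over the finite collection of isotypic pieces and dividing by $\deg(T_m)$ yields the proposition.

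The main obstacle is the Ramanujan input in the previous paragraph: one must verify that every $\pi$ contributing to $A_0$ is tempered at the unramified places. The substantive content is twofold: the Steinberg condition at $p$ must propagate through base change to guarantee cuspidality on $\GL_n/F$ (ruling out non-tempered Eisenstein contributions), and the temperedness of cohomological cuspidal representations of $\GL_n$ over a CM field rests on the existence and local--global compatibility of the associated compatible systems of Galois representations.
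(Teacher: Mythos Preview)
Your proposal is correct and follows essentially the same route as the paper: reduce to the finitely many $\pi$-isotypic pieces in $A_0$, transfer $\pi$ to $\GL_n$ over the CM field, use the Steinberg condition at $p$ to force cuspidality of the transfer, and then invoke the known cases of Ramanujan for cohomological conjugate-self-dual cuspidal representations to conclude that each unramified $\pi_\ell$ is tempered, whence $|T_m v|\le C\,N(T_m)$. The one place where the paper is slightly more explicit is that in the Kottwitz-variety setting of Sections~1--4 the group $G_\cK$ is the units of a \emph{division} algebra, so base change lands on $\cK^\times\times D^\times$ and a further Jacquet--Langlands step is required before one is on $\GL_n$; the paper then invokes Moeglin--Waldspurger to deduce cuspidality of $\Pi$ from the essential square-integrability of $\Pi_p$, which is the same content as your remark that a nontrivial isobaric sum cannot be locally square-integrable.
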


\begin{proof}
Of course, by multiplying with the degree, it suffices to show that $|T_m(v)| \leq C |N(T_m)|$ (for some constant $C$, not necessarily the same constant as in the Theorem). By our Theorem in Equation~\eqref{previoustheorem} it suffices to prove this inequality for each vector $v \in \pi_\ff^K$ in each automorphic representation $\pi$ contributing to the character formula of $A_0$. Let $\pi$ be one of these cuspidal automorphic representations. We base change $\pi$ to an automorphic representation $BC(\pi)$ of $\cK^\times \times D^\times$, viewed as an algebraic group over $\Q$. Since our group is associated to a division algebra, we are using a very classical form of base change. More precisely, $D$ is a division algebra and therefore the second condition in Theorem A.3.1(b) of the Clozel-Labesse appendix of~\cite{MR1695940} is satisfied (alternatively, see~\cite[\S VI.2]{MR1876802}). Currently results exist for more general unitary groups of similitudes, the article~\cite[Thm.~1.1]{shinnote} proves base change for all the unitary groups of similitudes that we consider in the later sections (those groups are not necessarily anisotropic modulo the center). Next we apply the Jacquet-Langlands correspondence to send $BC(\pi)$ to an automorphic representation $\Pi := JL(BC(\pi))$ of the $\Q$-group $G^+ = \Res_{\cK/\Q} \Gm \times \Res_{F/\Q} \Gl_{n,F}$. The first reference where the Jacquet-Langlands correspondence is proved for division algebras, is in the work of Vigneras~\cite{vigneras}. Harris and Taylor also give a proof for division algebras in their book~\cite[\S VI.1]{MR1876802}. Finally Badulescu treats in the article~\cite[Thm.~4.2(a)]{MR2390289} the general linear groups over division algebras. The automorphic representation $\Pi$ is \emph{discrete}. Both base change and Jacquet-Langlands have local variants at all the prime numbers and the places at infinity. Furthermore global base change (resp. Jacquet-Langlands transfer) is compatible with local base change (resp. Jacquet-Langlands transfer). At the prime $p$ we have $G^+(\qp) \cong G(\qp) \times G(\qp)$ and, by local-global compatibility, $\Pi_p$ is isomorphic to $\pi_p \otimes \pi_p$. The representation $\pi_p$ is essentially square integrable because it is an unramified twist of the Steinberg representation, and therefore $\Pi_p$ also has this property. The representation $\Pi$ is then forced to be cuspidal by the classification of Moeglin-Waldspurger of the discrete spectrum~\cite{MR1026752}. Because $\Pi$ is cuspidal the Ramanujan conjecture applies to it. This conjecture is true for $\Pi$ because $\Pi$ is obtained by base change and Jacquet-Langlands from an automorphic representation $\pi$ of an unitary group (of similitudes), implying that $\Pi$ is conjugate self-dual. Furthermore, $\Pi$ is cohomological because $\Tr(f_\infty, \pi_\infty) \neq 0$. For such representations $\Pi$ the Ramanujan conjecture is proved to be true in the articles~\cite{clozelpurity, MR2800722, caran}. Consequently the components $\Pi_\lambda$ are \emph{tempered} $\Gl_n(F_\lambda)$-representations for all primes $\lambda$ of $F$. Let $\ell$ be a prime not lying in $S$. Then $\Pi_\ell$ is the base change of $\pi_\ell$. We know that $\Pi_\ell$ is unramified and tempered, and we know that $\pi_\ell$ is unramified as well. Therefore, $\pi_\ell$ must be a tempered representation of $G(\Q_\ell)$. The representation $\pi_\ell$ is tempered, and therefore the absolute values of the eigenvalues of its Hecke matrix $\varphi_{\pi_\ell}$ are all equal to $1$. Let $\sum_{\nu \in X_*(T_\ell)} c_\nu [\nu] \in \C[T_\ell]^{W_\ell}$ be the Satake transform of $T_{m, \ell}$. Because $\varphi_{\pi_\ell}$ is tempered we have
\begin{equation}\label{intheproofofthemaintheorem}
|\Tr(T_{m, \ell}, \pi_\ell)| = \left|\sum_{\nu \in X_*(T_\ell)} c_\nu [\nu](\varphi)  \right| \leq \sum_{\nu \in X_*(T_\ell)} |c_\nu| = N_\ell(T_m). 
\end{equation}
Take the product over all $\ell$ to get the estimate of the proposition.  
\end{proof} 


\begin{proof}[Proof of Theorem~\ref{maintheorem}]
We have the composition $\psi \colon B \hookrightarrow S_\p \surjects \pi_0(S)_{\p}$. The Hecke operator $f$ acts via $\Psi(f)$ on $\pi_0(S)_{\p} = Y$. The map $\psi$ induces an inclusion of the free complex vector space on $Y(\lfq)$ into $A$: $i \colon A_{\textup{Ab}} \hookrightarrow A$. The composition $\Avg \circ i$ is the identity on $A_{\textup{Ab}}$. Thus $\Avg$ induces a decomposition $A = A_{\textup{Ab}} \oplus \Ker(\Avg)$. We claim that $\Ker(\Avg) = A_0$, the subspace generated by the infinite dimensional automorphic representations. To see this, note that $A_{\textup{Ab}}$ is the sum of precisely all the one-dimensional automorphic representations of $G(\A)$ occurring in the character formula of $A$ (Equation~\eqref{previoustheorem}). The remaining (infinite dimensional) representations thus account for $A/A_{\textup{Ab}} = \Ker(\Avg)$. Thus $\Ker(\Avg) = A_0$. The composition $B \subset S_\p \surjects \pi_0(S)_{\p}$ is equivariant for the Hecke operators. Thus we have the formula $f \cdot i(v) = i(\Psi(f) v)$ for all $v \in A_{\textup{Ab}}$ and all Hecke operators $f$ on $G(\Af)$. The theorem now follows: For $v = v_0 + v_{\textup{Ab}}$, we have $T_m(v) = T_m(v_0) + T_m(v_{\textup{Ab}}) = T_m(v_0) + \Psi(T_m)\Avg(v)$. The sequence $T_m(v_0)$ is estimated by Proposition~\ref{mainprop}. This completes the proof.
\end{proof}

\section{A typical example}\label{sect:Hecke}\label{sect:example}

We give two typical examples of sequences of Hecke operators for which one can prove equidistribution. 

\subsection*{First example} Look at a sequence of powers of a fixed operator: Let $T \in \cH(G(\Af))$ be one Hecke operator satisfying the conditions (H1) and (H2) above, and $N(T)/\deg(T) \leq 1$. Then the sequence of powers $T_m := (T)^{*m}$ for the convolution product $*$ satisfies $\lim_{m \to \infty} \tfrac {N(T_m)}{\deg(T_m)} = 0$. By Theorem~\ref{maintheorem} the sequence $T_m$ thus acts with equidistribution on the basic stratum.

\subsection*{Second example} We define an explicit sequence of Hecke operators $T_{r, m} \in \cH(G(\Af))$, where $m$ ranges over the square free integers, and $r$ ranges over the integers $1 \leq r \leq n-1$. Consider the $\Q$-group $G_+ := \Res_{\cK/\Q} G_{\cK}$ with $G_+(\Q) = \cK^\times \times D^\times$. Let $S$ be a finite set of finite, rational primes, such that:
\begin{enumerate}
\item for all primes $\ell$ not lying in $S$, the group $G_+(\ql)$ is a product of general linear groups over finite, unramified extensions of $\ql$; 
\item $K$ splits into a product $K = K_S K^S$, where $K_S$ is a subgroup of $G(\A_{\ff, S})$ and $K^S$ is a subgroup of $G(\Af^S)$;
\item the prime $p$ lies in $S$;
\item the group $K^S$ is hyperspecial. 
\end{enumerate}

Let $G^+$ be the $\Q$-group $\cK^\times \times \Gl_n(F)$. Then $G^+$ is an inner form of $G_+$, and we have $G^+(\Ql) \cong G_+(\Ql)$ for all primes $\ell$ not in $S$. The group $G^+$ has an obvious model over $\Z$, and thus we have the hyperspecial subgroup $G^+(\Zhat) \subset G^+(\Af)$. Let $m$ and $r$ be integers, where we have $0 \leq r \leq n$ (no condition on $m$). Then, by definition, the operator $T_{r,m}^+$ is defined to be the characteristic function:
\begin{equation}\label{thefunction}
T_{r, m}^+ := \charr \lhk G^+(\Zhat) \cdot (1) \times \diag(\underset{r}{\underbrace{m, m, \ldots, m}}, 1, 1, \ldots 1) \cdot G^+(\Zhat) \rhk \in \cH(G^+(\Af)),
\end{equation}
where we should clarify the notation. We have $G^+(\Zhat) = \widehat \cO_{\cK}^\times \times \Gl_n(\widehat \cO_F)$, where $\widehat \cO_{\cK}^\times$ is the factor of similitudes. With $(1) \times \diag( \ldots)$, we mean an element of $G^+(\Zhat)$ that has trivial factor of similitudes, and $\diag(\ldots)$ describes a diagonal matrix in the general linear group over $\widehat \cO_F$. 
 
Because the group $G_+(\Af^S)$ is isomorphic to $G^+(\Af^S)$, the operator $T_{r, m}^{+S} = \bigotimes_{\ell \notin S} T_{r,m}^{(\ell)}$ lives also in the algebra $\cH(G_+(\Af^S))$. We have the base change morphism
$$
\textup{BC} \colon \cH(G_+(\Af^S)//G_+(\Zhat^S)) \to \cH(G(\Af^S)//K^S).
$$ 
We define the operator $T_{r, m}^{S}$ to be $\textup{BC}(T_{r,m}^{+S})$, and we define
\begin{equation}\label{theheckeoperator}
T_{r, m} := \one_{K^{S}} \otimes T_{r, m}^{S} \in C_c^\infty(G(\Af)//K). 
\end{equation}

We now verify that $\lim_{m, r \to \infty} T_{r, m} \to 0$. Because we have an explicit sequence, one can give explicit bounds:

\begin{proposition}\label{bounding_proposition}
Let $\eps > 0$. There exists an integer $M > 0$ such that for all square-free integers $m > M$ all integers $r$ with $1 \leq r \leq n-1$,  
$$
\left| \frac {N(T_{r, m})}{\deg(T_{r,m})} \right| < C m^{\eps - [F:\Q] \frac {r(n-r)}2 }. 
$$
\end{proposition}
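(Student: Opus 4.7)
The plan is to exploit multiplicativity of $N$ and $\deg$ along the tensor decomposition $T_{r,m} = \bigotimes_\ell T_{r,m,\ell}$. Since $m$ is square-free, the component $T_{r,m,\ell}^+$ of the auxiliary operator on $G^+(\Af)$ is the unit of the spherical Hecke algebra whenever $\ell \nmid m$, and by construction $T_{r,m,\ell} = \one_{K_\ell}$ for $\ell \in S$. Hence
\[
\frac{N(T_{r,m})}{\deg(T_{r,m})} \;=\; \prod_{\substack{\ell \mid m \\ \ell \notin S}} \frac{N(T_{r,m,\ell})}{\deg(T_{r,m,\ell})}.
\]
For such $\ell$, using $F \otimes \Q_\ell = \prod_{w \mid \ell} F_w$ and the fact that $v_\ell(m) = 1$, the operator $T_{r,m,\ell}^+$ factors as a tensor product over the $F$-places $w \mid \ell$ of the minuscule Hecke operators $T_{r,w}^+ := \one_{\GL_n(\cO_w)\, \lambda_r(\pi_w)\, \GL_n(\cO_w)}$ with $\lambda_r = (1^{(r)},\,0^{(n-r)})$; the $\widehat{\cO}_\cK^\times$-factor contributes trivially. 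Under the local isomorphism $G^+(\Q_\ell) \cong G_+(\Q_\ell)$ for $\ell \notin S$, the base-change map identifies spherical Hecke algebras compatibly with the Satake transform, so $T_{r,m,\ell}$ inherits the same local data.

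The next step is the classical minuscule Satake computation at each place $w$ of $F$ with residue cardinality $q_w$. Because $\lambda_r$ is minuscule for $\GL_n$, no lower-dominance contributions arise, and
\[
\mathrm{Sat}(T_{r,w}) \;=\; q_w^{r(n-r)/2} \cdot e_r(x_1, \ldots, x_n),
\]
with $e_r$ the $r$-th elementary symmetric polynomial. This yields $N(T_{r,w}) = \binom{n}{r} \cdot q_w^{r(n-r)/2}$, while $\deg(T_{r,w}) = \binom{n}{r}_{q_w}$ is the Gaussian binomial, which satisfies $\binom{n}{r}_{q_w} \geq c_n \cdot q_w^{r(n-r)}$ uniformly in $q_w$. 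Consequently
\[
\frac{N(T_{r,w})}{\deg(T_{r,w})} \;\leq\; C_n \cdot q_w^{-r(n-r)/2}
\]
for a constant $C_n$ depending only on $n$ and $r$.

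It then remains to assemble. For $\ell$ unramified in $F$ (true for all but finitely many $\ell$, which can be absorbed into $S$), writing $q_w = \ell^{f_w}$ we have $\sum_{w \mid \ell} f_w = [F:\Q]$, hence $\prod_{w \mid \ell} q_w^{-r(n-r)/2} = \ell^{-[F:\Q] r(n-r)/2}$. Taking the product over $\ell \mid m$ with $\ell \notin S$ yields a power-of-$\ell$ contribution equal to $m^{-[F:\Q] r(n-r)/2}$, up to a bounded factor absorbing the finitely many primes of $S$ that could divide $m$. The accumulated combinatorial constants grow at worst like $C^{\omega(m)}$, where $\omega(m)$ is the number of distinct prime divisors of $m$; the classical bound $\omega(m) = O(\log m / \log \log m)$ yields $C^{\omega(m)} = O(m^\eps)$ for every $\eps > 0$. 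Combining these estimates gives
\[
\frac{N(T_{r,m})}{\deg(T_{r,m})} \;\leq\; C \cdot m^{\eps - [F:\Q] r(n-r)/2}
\]
for $m$ sufficiently large. The main technical point to pin down is the first step: translating $N$ and $\deg$ through the base-change identification at primes outside $S$; once that is justified via the local isomorphism $G^+(\Q_\ell) \cong G_+(\Q_\ell)$ and compatibility of base change with the Satake isomorphism, the remainder is a bookkeeping of classical minuscule Satake formulas.
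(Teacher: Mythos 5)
Your proposal is correct and follows essentially the same route as the paper: decompose by multiplicativity over the prime divisors of the square-free $m$, apply the minuscule Satake formula $q_w^{r(n-r)/2}e_r$ at each $F$-place, extract the main factor $m^{-[F:\Q]r(n-r)/2}$, and absorb the accumulated $\binom{n}{r}$-factors into $m^\eps$. The only cosmetic differences are that you cite the Gaussian binomial for the degree where the paper evaluates $e_r$ at the Hecke matrix of the trivial representation, and you invoke $\omega(m)=O(\log m/\log\log m)$ where the paper derives the same bound directly from Stirling's formula.
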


Before proving Proposition~\ref{bounding_proposition}, we first prove two lemmas. 

\begin{lemma}\label{lem_computation_A}
There exists a constant $C \in \R_{> 0}$ such that for all square-free integers $m > M$ and all integers $r$ with $1 \leq r \leq n-1$ we have
$$
\left| \frac {N(T_{r, m})}{\deg(T_{r,m})} \right| \leq C {n \choose r}^{c_F(m)} m^{-[F:\Q] \tfrac {r(n-r)}2}. 
$$
\end{lemma}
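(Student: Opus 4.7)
The plan is a direct local computation that exploits two features in tandem: the squarefreeness of $m$ and the fact that the coweight $\mu = (1^r, 0^{n-r})$ is minuscule for $\GL_n$. Because $m$ is squarefree, the Hecke function $T_{r,m}$ is a pure tensor $\bigotimes_\ell T_{r,m,\ell}$ whose only non-trivial local factors sit at primes $\ell \mid m$. The primes lying in the bad set $S$ contribute a bounded amount to both $N(T_{r,m})$ and $\deg(T_{r,m})$, which I absorb into the constant $C$, so it suffices to bound the ratio $N_\ell(T_{r,m,\ell})/\deg_\ell(T_{r,m,\ell})$ at each $\ell \mid m$ outside $S$ and then take the product.

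Next I would unwind the base change. At an unramified $\ell$, the morphism $\mathrm{BC}$ transports spherical functions between $G_+$ and $G$ compatibly with the Satake isomorphism via the natural embedding of dual groups. Consequently, the local factor of $T_{r,m,\ell}$ on $G(\Q_\ell)$ decomposes further as a tensor product over the primes $\lambda$ of $F$ above $\ell$, the $\lambda$-factor being the characteristic function of the minuscule double coset $\GL_n(\cO_\lambda)\, \varpi_\lambda^{\mu}\, \GL_n(\cO_\lambda)$.

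The local calculation is classical. The number of left cosets in $K_\lambda\, \varpi_\lambda^{\mu}\, K_\lambda / K_\lambda$ equals the Gaussian binomial coefficient $\binom{n}{r}_{q_\lambda}$, which satisfies the clean bound $\binom{n}{r}_{q_\lambda} \geq q_\lambda^{r(n-r)}$. Because $\mu$ is minuscule, the Satake transform is the closed expression
$$
q_\lambda^{\langle \rho,\mu\rangle}\, \chi_\mu \;=\; q_\lambda^{r(n-r)/2}\, e_r(t_1,\ldots,t_n),
$$
where $\chi_\mu$ is the character of the $r$-th exterior power of the standard representation of the dual $\GL_n$ and $e_r$ is the elementary symmetric polynomial. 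Written in the monomial basis of $\C[X_*(T_\lambda)]$, $e_r$ has exactly $\binom{n}{r}$ terms each with coefficient $1$, whence $N_\lambda(T_{r,\lambda}) = q_\lambda^{r(n-r)/2}\binom{n}{r}$ and therefore
$$
\frac{N_\lambda(T_{r,\lambda})}{\deg_\lambda(T_{r,\lambda})} \;\leq\; \binom{n}{r}\, q_\lambda^{-r(n-r)/2}.
$$

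Multiplying this local estimate over the $c_F(m)$ primes $\lambda \mid m$ pulls out a combinatorial factor $\binom{n}{r}^{c_F(m)}$ together with $\prod_{\lambda \mid m} q_\lambda^{-r(n-r)/2}$. Since the rational primes $\ell \mid m$ lie outside $S$ they are unramified in $F$, giving $\prod_{\lambda \mid \ell} q_\lambda = \ell^{[F:\Q]}$ for each $\ell \mid m$; taking the product over $\ell \mid m$ yields $\prod_{\lambda \mid m} q_\lambda = m^{[F:\Q]}$, which produces exactly the asserted bound. The main technical nuisance is the base change bookkeeping: one has to verify that the image of the diagonal operator $T_{r,m}^{+}$ under $\mathrm{BC}$ factors locally as claimed and that its Satake expansion is the one above. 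This is standard for spherical Hecke algebras at unramified places, where $\mathrm{BC}$ is dual to an embedding of dual groups, but it requires some care in matching Satake parameters across the isomorphism $G_+(\Q_\ell) \cong G^+(\Q_\ell)$.
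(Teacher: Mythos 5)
Your argument is correct and follows essentially the same route as the paper: reduce via squarefreeness to a local computation at each $\ell \mid m$, decompose further over the $F$-places $\lambda \mid \ell$, invoke the Satake formula for the minuscule coweight $\mu=(1^r,0^{n-r})$ to identify the Satake transform with $q_\lambda^{r(n-r)/2}\,e_r$, bound $N_\lambda$ by counting the $\binom{n}{r}$ terms, bound $\deg_\lambda$ from below via the largest monomial of $e_r$ evaluated at the trivial Hecke matrix, and multiply over $\lambda$ and $\ell$. Your version keeps the $q_\lambda^{r(n-r)/2}$ prefactor in both $N_\lambda$ and $\deg_\lambda$ so that it cancels, while the paper's bookkeeping writes the ratio after absorbing that factor; the end result is identical.
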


\begin{notation}
Let $m$ be a positive integer, unramified in $F$. In the lemma above we wrote $c_F(m)$ for the number of $\cO_F$-prime ideals $\lambda$ containing the number $m$.
\end{notation}

\begin{lemma}\label{stirlinglemma}
Let $\eps > 0$. There exists an integer $M > 0$ such that for all integers $m > M$ we have ${n \choose r}^{c_F(m)} \leq m^\eps$.
\end{lemma}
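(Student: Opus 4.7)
The plan is to reduce the lemma to a standard estimate on the number of distinct prime divisors of an integer. Since $n$ is fixed and $r$ ranges over a finite set, the binomial coefficient $\binom{n}{r}$ is bounded above by a constant $B := \max_{1 \le r \le n-1} \binom{n}{r}$ depending only on $n$. Taking logarithms, the inequality $\binom{n}{r}^{c_F(m)} \le m^\eps$ reduces to
$$c_F(m) \cdot \log B \le \eps \log m,$$
so it suffices to show that $c_F(m) = o(\log m)$ as $m \to \infty$.

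Next I would bound $c_F(m)$ in terms of the number $\omega(m)$ of distinct rational prime divisors of $m$. Every prime ideal $\lambda \subset \cO_F$ containing $m$ lies above some rational prime $p \mid m$, and for each such $p$ there are at most $[F:\Q]$ primes of $\cO_F$ above it. Hence
$$c_F(m) \le [F:\Q] \cdot \omega(m).$$

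The final step is to invoke the classical estimate $\omega(m) = O(\log m / \log \log m)$ as $m \to \infty$ (a standard consequence of the fact that the product of the first $k$ primes grows faster than any polynomial in the largest one, e.g.\ Hardy-Wright). Combining, we have
$$c_F(m) \log B \le [F:\Q] \, (\log B) \, \omega(m) = O\!\left(\frac{\log m}{\log \log m}\right),$$
which is eventually smaller than $\eps \log m$ for any fixed $\eps > 0$. Choosing $M$ so that the inequality holds for all $m > M$ finishes the proof.

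There is no real obstacle here; the only mildly non-trivial input is the classical bound $\omega(m) \ll \log m / \log \log m$, which is completely standard. The key point is simply that the exponent $c_F(m)$ grows much more slowly than $\log m$, while the base $\binom{n}{r}$ is a fixed constant.
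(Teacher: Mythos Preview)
Your argument is correct and follows essentially the same idea as the paper's proof: both reduce the claim to showing $c_F(m)/\log m \to 0$, which is the classical bound $\omega(m) = O(\log m/\log\log m)$. The only differences are cosmetic: the paper restricts to $F=\Q$ (leaving the reduction you carry out via $c_F(m)\le [F:\Q]\,\omega(m)$ to the reader) and re-derives the bound on $\omega(m)$ from the inequality $\omega(m)!\le m$ via Stirling's formula, whereas you simply cite it as a standard fact.
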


Clearly, Proposition~\ref{bounding_proposition} follows once the two lemmas have been established. Let us now prove the two lemmas. 

\begin{proof}[Proof of Lemma~\ref{lem_computation_A}] Let $\ell$ be a prime divisor of $m$. Because $m$ is square free, the prime $\ell$ divides $m$ precisely once, and the $\ell$-th part of the function $T_{r, m}^{+}$ equals
\begin{equation}\label{truebecsquarefree}
\charr \lhk G^+(\zl) \cdot (1) \times \diag(\ell, \ell, \ldots, \ell, 1, 1, \ldots, 1) \cdot G^+(\zl) \rhk \in \cH(G(\ql)).
\
\end{equation}
The element $(1) \times \diag(\ell, \ell, \ldots, \ell, 1, 1, \ldots, 1) \in G^+(\ql)$ is the evaluation at $\ell$ of a \emph{miniscule} cocharacter $\mu_r \in X_*(G^+)$. The field $F$ is unramified above $\ell$, and therefore $\ell$ is a prime element of the local field $F_\lambda$ for every $F$-place $\lambda$ dividing $\ell$. Because $\mu_r$ is miniscule there is a simple formula for the Satake transform of $T_{r, m}^{+(\ell)}$ (cf.~\cite[Thm.~2.1.3]{MR761308}):
\begin{equation}\label{thesattransform}
\cS(T_{r, m}^{+(\ell)}) = 1 \otimes \bigotimes_{\lambda | \ell} q_\lambda^{\frac {r(n-r)}2} \sum_{1 \leq i_1 < i_2 < \cdots < i_r \leq n} X_{i_1} X_{i_2} \cdots X_{i_r}, 
\end{equation}
in the algebra
\begin{equation}\label{thesakalgebra}
\C[X_*(T_{\ql})] = \C[Z] \otimes \bigotimes_{\lambda | \ell} \C[X_1^{\pm 1}, X_2^{\pm 1}, \ldots, X_n^{\pm 1}],
\end{equation}
where $T_{\ql} \subset G^+_{\ql}$ is the diagonal torus. We specify that the big tensor product in these equations ranges over all the $F$-places $\lambda$ lying above $\ell$, and for such an $F$-place $\lambda$, we write $q_\lambda$ for the cardinality of the residue field at $\lambda$. 

Note that, up to constant (which we ignore), the degree $\deg(T_{r,m,\ell})$ equals $\deg(T_{r,m, \ell}^+)$. The degree $\deg(T_{r,m,\ell}^+)$ is the evaluation of the polynomial $\cS(T_{r,m,\ell}^{+})$ at the Hecke matrix of the trivial representation $\varphi_{\textup{Triv}}$, and is therefore made completely explicit at this point. We may now estimate $|\cS(T_{r, m,\ell}^{+})(\varphi_\textup{Triv})|$. If we evaluate the symmetric polynomial
$$
\sum_{1 \leq i_1 < i_2 < \cdots < i_r \leq n} X_{i_1} X_{i_2} \cdots X_{i_r},
$$
at the Hecke matrix of the trivial representation of $\Gl_n(F_\lambda)$, then the largest appearing monomial is $q_\lambda^{(n-1)/2 + (n - 3)/2 + \ldots + (n - 2r + 1)/2} = q_{\lambda}^{r(n-r)/2}$.
Hence the following lower bound:
\begin{equation}\label{preprefinal}
|\cS(T_{r, m, \ell}^{+})(\varphi_{\textup{Triv}})| \geq C_{\textup{Sat}} \prod_{\lambda | \ell} q_{\lambda}^{\tfrac {r(n-r)}2} = C_{\textup{Sat}} \ell^{[F:\Q]\tfrac {r(n-r)}2},
\end{equation} 
here $C_{\textup{Sat}}$ is a certain constant. We have
\begin{equation}\label{preprefinalB}
|N(T_{r,m,\ell}| \leq |N(T_{r, m, \ell}^+)| = C_{\textup{Sat}}\prod_{\lambda | \ell} {n \choose r} = C_{\textup{Sat}}{n \choose r}^{c_F(\ell)}.
\end{equation}
The lemma follows by combining Equations~\eqref{preprefinal} and~\eqref{preprefinalB}. 
\end{proof}

We prove Lemma~\ref{stirlinglemma} only for $F = \Q$; we leave it to the reader to reduce to this case, or to extend the argument below.

\begin{proof}[Proof of Lemma~\ref{stirlinglemma}] We have $(c_\Q(m))! \leq m$. Write $m = \Gamma(x)$ for some $x \in \R_{\geq 0}$ where $\Gamma$ is the usual Gamma function. Then $c_\Q(m) \leq x$ and from Stirling's formula we get
$$
\frac {c_\Q(m)}{\log(m)} \sim \frac {c_{\Q}(m)} {\log {(\sqrt{2 \pi x} e^{-x} x^x)}} \leq \frac 1 {\log(x) - 1 + \tfrac {\log(\sqrt{2 \pi x} )} x}.
$$
The right hand side converges to $0$ for $x \to \infty$. Thus we may find (for any $\eps>0$) an $M$ such that $\exp(c_\Q(m)) \leq m^\eps$ for all $m > M$. This completes the proof. 
\end{proof}

Applying Theorem~\ref{maintheorem} to the explicit sequence $T_{r, m}$ we get

\begin{corollary}
Let $v \in A$ be an element. Then there exists a constant $C \in \R_{>0}$ such that for any $\eps > 0$ there exist an index $M$, such that for all square-free integers $m > M$ and all $r$ with $1 \leq r \leq n-1$ we have
$$
\left| \frac {T_{r, m}(v)}{\deg(T_{r, m})} - \Psi(T_{r, m})(\Avg(v)) \right| \leq C m^{\eps -[F:\Q] \tfrac {r(n-r)}2}. 
$$
\end{corollary}

\section{Endoscopic unitary Shimura varieties}\label{sect.extension}

We extend Thm.~3.13 from our article~\cite{kret1} to a larger class of endoscopic unitary Shimura varieties satisfying a technical simplifying condition (Hypothesis~\ref{coprimehypothesis}). We prove later that this condition is satisfied for unitary Shimura varieties whose basic stratum is finite. We consider a Shimura variety of PEL-type, of type A, as considered by Kottwitz in his article~\cite{MR1124982}. We assume fixed a PEL-datum consisting of
\begin{enumerate}
\item[\textbf{(A1)}] A simple algebra 
$T$ over a CM field $F$;
\item[\textbf{(A2)}] A positive involution on the algebra $T$ inducing complex conjugation on $F$; 
\item[\textbf{(A3)}] A Hermitian $T$-module $(V, \langle \cdot, \cdot \rangle)$, where $\langle \cdot, \cdot \rangle$ is symplectic;
\item[\textbf{(A4)}] $h \colon \C \to \End_Y(V)_\R$ is a morphism of $\R$-algebras such that $h(\li z) = h(z)^*$ for all $z \in \C$, and the symmetric bilinear form $V_\R \otimes V_\R \to \R$, $(x, y) \mapsto \langle x, h(i) y \rangle$ is positive definite. 
\end{enumerate}

Let $(G, X)$ be the Shimura datum associated to (\textbf{A1}), (\textbf{A2}), (\textbf{A3}) and the morphism $h^{-1}$. We assume that $F$ is of the form $\cK F^+$, with $\cK/\Q$ a quadratic imaginary extension of $\Q$ and $F^+$ a totally real extension of $\Q$. Then the group $G_\cK$ is isomorphic to a product of (Weil-restriction of scalars of) general linear groups. We let $p$ be a prime of good reduction in the sense of Kottwitz \cite[\S5]{MR1124982} and we assume that $p$ splits in $\cK/\Q$. We write $E$ for the reflex field of the Shimura datum. Furthermore we let $K \subset G(\Af)$ be a compact open subgroup of the form $K = K_p K^p$, with $K_p \subset G(\qp)$ hyperspecial and $K^p \subset G(\Afp)$ sufficiently small so that the PEL-type moduli problem of level $K$ is defined over $\cO_E \otimes \Z_{(p)}$ and the variety $S = S_K$ representing this moduli problem is smooth and quasi-projective. Pick an $E$-prime $\p$ above $p$ and let $B$ be the basic stratum of the variety $S_\p$, where $\fq$ is the residue field of $\cO_E$ at $\p$. We pick an embedding of $\li \Q \to \lqp$ extending the embedding of $E$ into $\lqp$ defined by $\p$. We fix once and for all an embedding of $F$ into $\C$, and $\li \Q$ will always mean the algebraic closure of $\Q$ in $\C$. The field $\lfq$ is the residue field of $\lqp$ and the field $\fq$ is the residue field of $E$ at $\p$. Because we have the embeddings $F^+ \subset \li \Q\subset \lqp$, the Galois group $\Gal(\lqp/\qp)$ acts on the set of infinite $F^+$-places $V(F^+)$ and we may identify any $\wp|p$ with a Galois orbit $V(\wp)$ of infinite places. Let $U \subset G$ be the subgroup of elements with trivial factor of similitudes. Then $U(\R)$ is a product of standard real groups: $U(\R) = \prod_{v \in V(F^+)} U(s_v, n - s_v)$ for certain numbers $s_v$. We assume that $s_v \leq \tfrac 12 n$ so that these numbers are well defined.  For each $F^+$-place $\wp$ above $p$ we define $s_\wp := \sum_{v \in \wp} s_v$ (see also \S\ref{sect.first}).  The additional technical condition is the following:

\begin{hypothesis}\label{coprimehypothesis}\label{hyp.noendoc}
There exists an $F^+$-prime $\wp$ such that the number $s_\wp$ is coprime to $n$.
\end{hypothesis}

\begin{theorem}\label{section7thm}
Assume Hypothesis~\ref{coprimehypothesis} is true for the Shimura datum $(G, X)$. The sum
\begin{equation}\label{thmsumfix}
\sum_{x' \in \Fix^B_{\Phi_\p^\alpha \times f^{\infty p}}(\lfq) } \Tr(\Phi_\p^\alpha \times f^{\infty p}, \iota^*(\lql)_x),
\end{equation}
equals 
\begin{align}\label{thmautoms}
\sum_{\pi, \dim(\pi) = 1, \pi_p = \textup{Unr}} & m(\pi) \zeta_\pi^\alpha P(q^\alpha) \cdot \Tr(f^p, \pi^p) + \ \cr 
&+ (-1)^{(n-1)r} \cdot \sum_{\pi, \pi_p = \textup{St. type}} m(\pi) \zeta_\pi^\alpha P(q^\alpha) \cdot \Tr(f^p, \pi^p), 
\end{align}
where in both sums $\pi$ ranges over the discrete automorphic representations of $G$. The number $r$ equals the number of $F^+$-places $\wp|p$ such that $s_\wp > 0$.
\end{theorem}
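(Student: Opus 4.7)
The plan is to follow the same broad strategy as the proof of [\cite{kret1}, Thm.~3.13] quoted in Equation~\eqref{previoustheorem}, but to work in the presence of endoscopy and then exploit Hypothesis~\ref{coprimehypothesis} to kill all endoscopic contributions. First I would take Kottwitz's point counting formula for $\#\mathrm{Fix}_{\Phi_\p^\alpha \times f^{\infty p}}(S_\p(\lfq))$ from~\cite{MR1124982} and restrict it to the basic stratum by replacing the spherical test function at $p$ with a function whose twisted orbital integrals pick out exactly the basic $\sigma$-conjugacy classes in $B(G_{\qp})$. This is the construction already used in~\cite{kret1}, and the outcome is an expression for the sum in Equation~\eqref{thmsumfix} as a sum of products of (twisted) orbital integrals attached to $\sigma$-conjugacy classes that are $\R$-elliptic, $\qp$-basic, and unramified away from the ramification set.

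Next I would stabilize this restricted formula following Kottwitz~\cite{MR1044820}, writing it as a sum over (isomorphism classes of) elliptic endoscopic groups $H$ of $G$, each contributing a stable distribution on $H(\A)$ built from matching functions $f^H$ obtained via the fundamental lemma (now a theorem). Applying the stable trace formula for each $H$ (as used in Kottwitz' conjectural description of the cohomology, and rigorously available for our situation by the work of Morel, Shin, and others that~\cite{shinnote} relies on) converts each summand into a sum over discrete automorphic representations of $H$. The trivial endoscopic group $H=G$ contributes precisely the terms parallel to those appearing in Equation~\eqref{previoustheorem}, but twisted by the correct sign $(-1)^{(n-1)r}$ coming from the product of local signs at the $r$ places $\wp | p$ where $s_\wp = 1$ and from the Euler--Poincar\'e function $f_\infty$.

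The main new input, and the central obstacle, is showing that every nontrivial elliptic endoscopic group $H$ contributes zero. This is where Hypothesis~\ref{coprimehypothesis} is essential, and the argument should be encapsulated in Lemma~\ref{prop.noendoscopy}. Concretely, the elliptic endoscopic groups of our unitary similitude group are (up to outer forms) of the shape $G(U(n_1) \times U(n_2))$ with $n = n_1 + n_2$ and $0 < n_i < n$, and a nontrivial endoscopic contribution would require the local archimedean and $p$-adic transfer factors to be compatible with a splitting of the signature $(s_\wp, n - s_\wp)$ at the prime $\wp$ of our hypothesis into $(s_\wp^{(1)}, n_1 - s_\wp^{(1)}) + (s_\wp^{(2)}, n_2 - s_\wp^{(2)})$ with $s_\wp^{(1)} + s_\wp^{(2)} = s_\wp$. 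The construction of the pseudo-coefficient of the Steinberg-type representation at $\wp$, combined with the shape of the transfer for miniscule cocharacters, forces each $n_i$ to be divisible by $\gcd(n, s_\wp)$; but by hypothesis this gcd is $n$, contradicting $n_i < n$. Hence only $H=G$ survives.

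Finally, once the endoscopic sum is reduced to the $H = G$ term, I would invoke base change from $G$ to $\mathrm{Res}_{\cK/\Q} G_\cK$ together with Jacquet--Langlands to the split inner form, exactly as in the proof of Proposition~\ref{mainprop}; the classification of Moeglin--Waldspurger~\cite{MR1026752} then forces any discrete $\pi$ occurring in the formula to have a cuspidal base change, so its component $\pi_p$ must be essentially square integrable. Since our test function at $p$ is the projector onto the basic stratum and $K_p$ is hyperspecial, the usual compatibility (\textit{cf.} the definition of Steinberg type in~\S\ref{sect.first}) leaves only two possibilities for $\pi_p$: an unramified twist of Steinberg at each $\wp$ with $s_\wp = 1$ and an unramified generic representation at each $\wp$ with $s_\wp = 0$ (the Steinberg-type contribution), or a one-dimensional unramified character (the $\dim \pi = 1$ contribution). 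Assembling these two families, with the sign and the polynomial factor $\zeta_\pi^\alpha P(q^\alpha)$ coming respectively from the Euler--Poincar\'e function and from the Frobenius eigenvalues on the local system, yields Equation~\eqref{thmautoms}. I expect the verification that the vanishing of endoscopy under Hypothesis~\ref{coprimehypothesis} is in fact complete (i.e., that no exotic transfer can sneak a nontrivial $H$ back in) to be the delicate point requiring the most care.
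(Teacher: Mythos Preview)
Your overall architecture---restrict Kottwitz's counting formula to the basic stratum, stabilize \`a la~\cite{MR1044820}, kill the proper endoscopic terms via Hypothesis~\ref{coprimehypothesis}, then identify the surviving automorphic contributions---is the paper's strategy. Two points need correction.

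First, your mechanism for Lemma~\ref{prop.noendoscopy} is misstated and mislocated. You write that the hypothesis makes $\gcd(n,s_\wp)=n$; in fact Hypothesis~\ref{coprimehypothesis} says $\gcd(n,s_\wp)=1$. More importantly, the vanishing has nothing to do with archimedean signature-splitting or compatibility of transfer factors: it is a purely local computation at $p$. The endoscopic transfer of the truncated Kottwitz function $\chi_c^{G(\qp)}f_\alpha$ to a proper $H\cong G(GU^*(n_1)\times GU^*(n_2))$ is, up to a harmless character twist, the constant term $f_\alpha^{(P_H)}$ along the maximal parabolic with Levi $H$ (Shin's explicit description of transfer at split places). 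The input you need is Lemma~\ref{truncatedconstanttermsvanish}: the $\wp$-factor of $f_\alpha$ is $f_{n\alpha s_\wp}$, and by~[\cite{kret1}, Lem.~1.9] its truncated constant term along blocks of sizes $n_1,n_2$ vanishes unless $n/\gcd(n,s_\wp)$ divides each $n_i$. When $\gcd(n,s_\wp)=1$ this forces $n\mid n_i$, impossible for a proper Levi, so the transfer is zero.

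Second, after reducing to the principal term you propose to apply the full stable trace formula on $G^*$ and then classify $\pi_p$ globally via base change, Jacquet--Langlands, and Moeglin--Waldspurger. The paper does neither. It compares the surviving $\ST_e^{G^*}(\chi_c f)$ with Langlands's stabilization of $T_e^G$ (using Morel's $\ST_e^{H*}=\ST_e^H$) to obtain $\Tr(\chi_c f,A)=T_e^G(\chi_c f)$; then Arthur's simple form of the invariant trace formula applies because $\chi_c^{G(\qp)}f_\alpha$ is cuspidal at $p$ (again Lemma~\ref{truncatedconstanttermsvanish}) and $f_\infty$ is cuspidal at $\infty$, while the spectral side collapses to $R_{\textup{disc}}$ since every properly induced unitary $\pi_p$ has zero compact trace against $f_\alpha$. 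The identification of contributing $\pi_p$ as Steinberg-type or one-dimensional unramified then comes from the explicit compact-trace computations of~\cite{kret1}, not from global functoriality. Your route may be made to work, but it imports heavier machinery and, as written, skips the step that bridges the elliptic stable distribution $\ST_e^{G^*}$ to the discrete spectrum; that bridge is exactly where the paper spends its effort.
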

\begin{remark}
Let $\ell$ be a prime number different from $p$ and fix an isomorphism $\lql \cong \C$ of abstract fields. Without further mention, we will use this isomorphism to turn the complex valued function $f^{p\infty}$ into a $\lql$-valued function. We used this isomorphism also in Formula's~\eqref{thmsumfix} and~\eqref{thmautoms}. 
\end{remark}
\begin{proof}
Let $\alpha$ be a integer. Consider the function $f = f_\infty f_\alpha f^p$ in the Hecke algebra of $G$, where $f_\infty$ is a Clozel-Delorme function for the trivial complex representation of $G_\C$ and $f^p \in \cH(G(\Af^p))$ is any $K^p$-spherical Hecke operator.  Write $\iota$ for the inclusion $B \hookrightarrow S_\p$. Recall that the article~\cite[p.~376]{MR1124982} gives the result: 
\begin{equation}\label{kottwitzequation}
\sum_{x' \in \Fix_{\Phi_\p^\alpha \times f^{\infty p}}(\lfq) } \Tr(\Phi_\p^\alpha \times f^{\infty p}, \iota^* (\lql)_x) = |\ker^1(\Q, G)|\sum_{(\gamma_0; \gamma, \delta)} c(\gamma_0; \gamma, \delta) O_\gamma(f^{\infty p}) TO_{\delta}(\phi_\alpha),
\end{equation}
where the notations are from [\S19, \textit{loc. cit}]. We restrict this formula to the basic stratum $B$ by considering on the right hand side only \emph{basic} Kottwitz triples. The equation then becomes
\begin{align}\label{kottwitzequation2}
\sum_{x' \in \Fix^B_{\Phi_\p^\alpha \times f^{\infty p}}(\lfq) }& \Tr(\Phi_\p^\alpha \times f^{\infty p}, \iota^* (\lql)_x) = \cr 
& = |\ker^1(\Q, G)|\sum_{(\gamma_0; \gamma, \delta)} c(\gamma_0; \gamma, \delta) O_\gamma(f^{\infty p}) TO_{\delta}(\chi_{\sigma c}^{G(E_{\p, \alpha})} \phi_\alpha),
\end{align}
where 
\begin{itemize}
\item $E_{\p, \alpha}$ is the unramified extension of degree $\alpha$ of $E_\p$ (in $\lqp$); 
\item the function $\chi_{\sigma c}^{G(E_{\p, \alpha})}$ is the characteristic function of the subset of $\sigma$-compact elements in $G(E_{\p, \alpha})$; 
\item $\Fix^B_{\Phi_\p^\alpha \times f^{\infty p}}$ is the fibre product $\Fix_{\Phi_\p^\alpha \times f^{\infty p}} \times_{\Delta} B$;
\item $\Delta$ is the diagonal variety in $S_\p \times S_\p$. 
\end{itemize}
By the stabilization argument of Kottwitz in~\cite[\S7]{MR1044820} the right hand side of Equation~\eqref{kottwitzequation2} simplifies to
\begin{equation}\label{endoformula}
\sum_{H \in \cE} \iota(G, H) \cdot \ST_e^{H*}(\chi_c^{G(\qp)} h), 
\end{equation}
(see \cite[Thm.~7.2]{MR1044820}, and the proof preceding to that theorem) where
\begin{itemize}
\item $\cE$ is the (finite) set of (representatives of equivalence classes) of endoscopic groups $H$ associated to $G$ and unramified at all places where the data $(G, K)$ are unramified.  
\item $\ST_e^{H*}$ is the elliptic part of the geometric side of stable trace formula for $H$; it is a sum of stable integral orbitals on the elliptic $(G, H)$-regular elements in $H(\Q)$ (see first displayed formula of \cite[p.~189]{MR1044820}). 
\item The function $\chi_c^{G(\qp)} h \in \cH(H(\A))$ is defined as follows. When we write the product $\chi_c^{G(\qp)} h$ we actually mean the function $h^p \otimes (\chi_c^{G(\qp)} f_\alpha)^{H(\qp)}$, where $(\chi_c^{G(\qp)} f_\alpha)^{H(\qp)}$ is the endoscopic transfer of the function $\chi_c^{G(\qp)} f_\alpha$ on $G(\qp)$ to $H(\qp)$. In particular the function is only changed at the prime number $p$. Thus outside $p$, the function $h$ is the same as the one considered by Kottwitz: The function $h_\infty$ is defined at the second displayed formula of [\textit{loc. cit}, p. 186]; the function $h^{p \infty}$ is an endoscopic transfer of the function $f^{p\infty}$ to the endoscopic group $H(\Afp)$, see Equation~(7.1) of [p.~178, \textit{loc. cit.}].
\item $\iota(G, H)$ is a certain explicit rational number, see the second displayed formula on [\textit{loc. cit.}, p.~189]. 
\end{itemize} 
 Before we finish the proof of Theorem~\ref{section7thm} we prove some lemmas:

\begin{lemma}\label{truncatedconstanttermsvanish}
Let $P \subset G(\qp)$ be a proper standard parabolic subgroup of $G(\qp)$. Then the truncated constant term $\chi_c^{G(\qp)} f_\alpha^{(P)}$ vanishes.
\end{lemma}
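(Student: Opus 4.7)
The plan is to compute the constant term $f_\alpha^{(P)}$ explicitly and then observe that every element in its support fails to be compact in $G(\qp)$, so that multiplication by $\chi_c^{G(\qp)}$ annihilates it. The whole argument rests on two facts: $f_\alpha$ is built from a minuscule, non-central cocharacter, and a proper parabolic gives rise to proper Levi components of that cocharacter.

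First I would recall the shape of $f_\alpha$ at $p$. After the unramified base change (which is compatible with constant terms of spherical functions along standard parabolics), $f_\alpha$ is, up to a positive normalizing constant, the characteristic function $\one_{K_p \mu(p) K_p}$, where $K_p \subset G(\qp)$ is the fixed hyperspecial subgroup and $\mu \in X_*(T)$ is the Hodge cocharacter attached to the Shimura datum $(G, X)$. This cocharacter is miniscule and non-central in $G_{\qp}$: indeed, $\mu$ corresponds via $h$ to the non-trivial Hodge structure on $V$, so its image in $X_*(G/Z_G)_{\mathbb Q}$ is non-zero (the signatures at infinity are not all trivial by Hypothesis~\ref{coprimehypothesis}).

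Next I would compute $f_\alpha^{(P)}$ along a proper standard parabolic $P = MN$ by the standard formula for constant terms of minuscule characteristic functions. Using the Iwasawa decomposition $G(\qp) = P(\qp) K_p$ together with the Bruhat-style breakup of $K_p\mu(p)K_p \cap P(\qp)$ into $M$-double cosets, one obtains (up to the modulus $\delta_P^{1/2}$) a finite non-negative linear combination
\[
f_\alpha^{(P)} \;=\; c \cdot \sum_{\mu'} \one_{K_M \mu'(p) K_M},
\]
where the $\mu'$ range over $M$-dominant representatives of the images of $\mu$ in $X_*(T)$ realized by the decomposition $X_*(T) = X_*(A_M) \oplus X_*(A_M^{\perp})$. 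Crucially, each such $\mu'$ has non-trivial image in $X_*(A_M/A_G)_{\mathbb Q}$: since all $\mu'$ appearing are $W_G$-conjugate to $\mu$, the triviality of any one of them modulo $A_G$ would force $\mu$ itself to be central modulo $A_G$, contradicting the previous paragraph.

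Finally, any $m$ in the support of $f_\alpha^{(P)}$ lies in some $K_M \mu'(p) K_M$ with $\mu'$ non-central modulo $A_G$, so its image under the Kottwitz map is non-trivial in $X_*(A_M/A_G)_{\mathbb Q}$. Equivalently, $m$ is not power-bounded in $G(\qp)$, hence $\chi_c^{G(\qp)}(m) = 0$ and the product $\chi_c^{G(\qp)} f_\alpha^{(P)}$ vanishes pointwise. The main technical obstacle is the explicit expansion in the displayed formula above — specifically, showing that every cocharacter $\mu'$ occurring is $W_G$-conjugate to $\mu$ (and not a degenerate partial sum) and tracking the compatibility of this computation with the unramified base change; both are standard consequences of $\mu$ being miniscule together with the Satake-type analysis of double cosets.
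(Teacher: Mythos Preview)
Your argument has a genuine gap at the step where you claim every $\mu'$ in the expansion of $f_\alpha^{(P)}$ has non-trivial image in $X_*(A_M/A_G)_{\Q}$. You justify this by saying the $\mu'$ are all $W_G$-conjugate to $\mu$, so triviality of one of them in $X_*(A_M/A_G)_{\Q}$ would force $\mu$ itself to be central; but that implication is simply false. In a single $\GL_n$-factor take $n=4$, $s=2$, so $\mu=(1,1,0,0)$, and let $M=\GL_2\times\GL_2$. The $W_G$-conjugate $\mu'=(1,0,1,0)$ has block sums $(1,1)$, hence its image in $X_*(A_M/A_G)_{\Q}$ vanishes, yet $\mu$ is not central. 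This $\mu'$ occurs with non-zero coefficient in the constant term of the minuscule double-coset function, and the coset $K_M\mu'(p)K_M$ contains elements compact in $G$: the block matrix with each $2\times 2$ block equal to $\left(\begin{smallmatrix}0&1\\p&0\end{smallmatrix}\right)$ has all four eigenvalues of valuation $\tfrac12$. So without further input the truncated constant term does \emph{not} vanish. What is missing is exactly Hypothesis~\ref{coprimehypothesis}: when $\gcd(s_\wp,n)=1$, the equality of all block averages $|\mu'^{(i)}|/n_i=s_\wp/n$ would force $n\mid n_i$, which is impossible for a proper Levi. The paper uses the hypothesis in just this way, isolating a factor $\wp$ with $s_\wp$ coprime to $n$ and then invoking the $\GL_n$-statement \cite[Lem.~1.9]{kret1}; you invoke Hypothesis~\ref{coprimehypothesis} only to say $\mu$ is non-central, which is far too weak.

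A smaller issue: for $\alpha>1$ the function $f_\alpha$ is not a single minuscule double-coset function but the base-change transfer of one; its Satake transform involves $\alpha$-th powers, and after inverse Satake on $M$ its support is spread over many non-minuscule $K_M$-cosets. Saying that base change is compatible with constant terms is correct but does not reduce you to analysing the minuscule function on $G(\qp)$ itself.
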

\begin{proof} Let $P$ be a parabolic subgroup of $G(\qp)$. We have $f_\alpha^{(P)} = \one_{q^{-\alpha}} \otimes \bigotimes_{\wp | p} \prod f_{n \alpha s}^{(P_\wp)}$, where $P_\wp$ is the $\wp$-th component of $P$. If $P$ is proper, then $P_\wp$ is proper as well. Pick some $\wp|p$ such that $s_\wp$ is coprime to $n$ (Hypothesis~\ref{coprimehypothesis}). We look at the $\wp$-th component $f_\alpha^\wp$ of the function $f_\alpha \in \cH_0(G(\qp))$ via the isomorphism $\cH_0(G(\qp)) \cong \cH_0(\qp^\times) \otimes \bigotimes_{\wp|p} \cH_0(\Gl_n(F_\wp^+))$. In the notation of~\cite{kret1}, we have $f_\alpha^\wp = f_{n \alpha_v s_v}$ [Prop.~3.3, \textit{loc. cit.}]. These constant terms vanish for the proper parabolic subgroups in case $s$ is coprime to $n$ [Lem.~1.9, \textit{loc. cit.}].
\end{proof}

\begin{lemma}\label{prop.noendoscopy}
For any proper endoscopic group $H$ of $G$ we have $(\chi_c^{G(\qp)} f)^H_\alpha = 0$. 
\end{lemma}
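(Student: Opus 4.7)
The plan is to reduce the vanishing of the endoscopic transfer to the vanishing of the truncated constant terms already established in Lemma \ref{truncatedconstanttermsvanish}, exploiting the fact that at the split prime $p$, endoscopy for $G_{\qp}$ collapses to parabolic (Levi) descent.

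First I would use the hypothesis that $p$ splits in $\cK/\Q$ to observe $G_{\qp} \cong \qp^\times \times \prod_{\wp|p} \GL_n(F^+_\wp)$, a split group whose dual $\widehat{G}_{\qp}$ is (up to the central similitude factor) a product of copies of $\GL_n(\C)$. For such a dual group, the connected centralizer $\widehat{H} = (\widehat{G}^s)^{\circ}$ of any semisimple $s$ is a Levi subgroup of $\widehat{G}$; so for any endoscopic datum $(H,\mathcal{H},s,\xi)$ of $G$, the localization $\widehat{H}_{\qp}$ is a Levi subgroup of $\widehat{G}_{\qp}$. Since $G_{\qp}$ is split, this Levi in the dual corresponds to a Levi subgroup $M_p \subset G_{\qp}$ with $H_{\qp} \cong M_p$. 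I would then fix a standard parabolic $P_p \subset G_{\qp}$ with Levi component $M_p$; because $H$ is a \emph{proper} endoscopic group of $G$, the Levi $M_p$ is a proper Levi and $P_p$ is a proper parabolic.

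Second, I would invoke the (by now standard) form of the fundamental lemma for general linear groups due to Waldspurger, combined with the explicit computation of transfer factors at split primes given by Kottwitz–Shelstad: for any unramified $\phi \in \cH(G(\qp))$, the endoscopic transfer $\phi^{H(\qp)}$ agrees, up to a normalizing sign and modulus character, with the Jacquet constant term $\phi^{(P_p)}$ viewed as a function on $M_p(\qp) \cong H(\qp)$. A direct check shows that the truncation $\chi_c^{G(\qp)}$ is compatible with parabolic descent (an element is $\sigma$-compact in $G(\qp)$ iff its image in $M_p(\qp)$ is $\sigma$-compact there), so that applied to $\phi = \chi_c^{G(\qp)} f_\alpha$ one obtains
$$
(\chi_c^{G(\qp)} f_\alpha)^{H(\qp)} \;=\; c \cdot \bigl(\chi_c^{G(\qp)} f_\alpha\bigr)^{(P_p)}\big|_{M_p(\qp)}
$$
for some nonzero constant $c$ depending on the transfer factor normalization.

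Third, Lemma \ref{truncatedconstanttermsvanish} (which uses Hypothesis \ref{coprimehypothesis} through the vanishing $f_{n\alpha s_\wp}^{(P_\wp)} = 0$ at the coprime place $\wp$) shows the right hand side vanishes since $P_p$ is a proper parabolic. Combining with the fact that the remaining factors of $h = h^p \otimes (\chi_c^{G(\qp)} f_\alpha)^{H(\qp)}$ are unchanged outside $p$, we conclude $(\chi_c^{G(\qp)} f)^H_\alpha = 0$.

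The main obstacle is the second step: pinning down the precise identification of endoscopic transfer with parabolic descent at the split prime, in a way that is compatible with the truncation function $\chi_c^{G(\qp)}$. This is standard in the $\GL_n$ setting that the split hypothesis on $p$ provides, but it does require a careful bookkeeping of transfer factors and of the behaviour of $\sigma$-compactness under parabolic descent.
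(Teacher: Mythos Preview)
Your proof takes essentially the same route as the paper: identify $H(\qp)$ with a proper Levi of $G(\qp)$ at the split prime, recognize the endoscopic transfer there as (a twist of) the constant term along the corresponding parabolic, and conclude via Lemma~\ref{truncatedconstanttermsvanish}. The paper makes the middle step precise by quoting Shin's explicit transfer formula for quasi-split unitary similitude groups, which gives the transfer of the \emph{spherical} function $f_\alpha$ as $f_\alpha^{(P_H(\qp))}\cdot\chi_{\varpi,u}^+$ (a function, not just a constant), and then handles the non-spherical truncation factor $\chi_c^{G(\qp)}$ separately: since the transfer of conjugacy classes is just the inclusion $H(\qp)\subset G^*(\qp)$, the transfer of $\chi_c^{G(\qp)} f_\alpha$ is $\bigl(\chi_c^{G(\qp)}|_{H(\qp)}\bigr)\cdot f_\alpha^{(P_H(\qp))}\cdot\chi_{\varpi,u}^+$. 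Your direct appeal to the fundamental lemma on the non-spherical product $\chi_c^{G(\qp)} f_\alpha$ is a small imprecision, but your compatibility remark for the truncation is exactly this restriction argument, so the discrepancy is cosmetic.
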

\begin{proof}
The transfer $f \rightsquigarrow f^H$ from the function on $G(\A)$ to the endoscopic group $H(\A)$ factors through the transfer from $G$ to its quasi-split inner form $G^*$. At $p$, the group $G(\qp)$ is quasi split and therefore $G(\qp) = G^*(\qp)$ and we take the transfer from functions on $G(\qp)$ to functions on $G^*(\qp)$ to be trivial. We transfer the function $\chi_c^{G(\qp)} f_\alpha$ on $G^*(\qp)$ to $H(\qp)$. We first consider the function $f_\alpha \in \cH_0(G^*(\qp))$ ($\cH_0$ denotes the spherical Hecke algebra). In~\cite[p. 1668, Sect.~3.4, case 2]{MR2800722}, Sug Woo Shin describes explicitly the transfer for quasi-split similitudes unitary groups. He starts by describing the endoscopic groups, and explains that any $H$ can be identified with a group of the form $G(GU^*(n_1) \times GU^*(n_2))$ with $n = n_1 + n_2$ (there are some conditions on the possible partitions $n = n_1 + n_2$ here, but they are of no importance to us). In particular we assume $H$ is the Levi-component of a maximal standard parabolic subgroup $P_H$ of $G^*$. By the second last displayed formula on [\textit{loc. cit}, p.~1668] the transfer of $f_\alpha$ to a function on $H(\qp)$ is given by $f_\alpha^{(P_H(\qp))} \cdot \chi_{\varpi, u}^+$, where $\chi_{\varpi, u}^+$ is some function which we will not need to specify for our argument. The transfer of a conjugacy class in $H(\Qp)$ to a conjugacy class in $G^*(\qp)$ is the obvious construction (\ie induced from the inclusion $H(\qp) \subset G^*(\qp)$). Consequently the function 
\begin{equation}\label{thetransferredfunction}
\lhk \chi_c^{G(\qp)}|_{H(\qp)} \rhk f_\alpha^{(P_H(\qp))} \chi_{\varpi, u}^+ \in \cH(H(\qp)),
\end{equation}
is a transfer of the function $\chi_c^{G(\qp)} f_\alpha$ to $H(\qp)$. Therefore the transfer vanishes by Lemma \ref{truncatedconstanttermsvanish}.
\end{proof}

A function $h$ is called cuspidal if for every non-elliptic semi-simple conjugacy class $\gamma$ the orbital integral $O_\gamma(h)$ vanishes. 

\begin{lemma}
The truncated function $\chi_c^{G(\qp)} f_\alpha$ is cuspidal.
\end{lemma}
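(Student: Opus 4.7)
The plan is to combine two facts: $\chi_c^{G(\qp)}$ is a class function on $G(\qp)$, and the constant term $f_\alpha^{(P)}$ vanishes identically for every proper standard parabolic $P$ (Lemma \ref{truncatedconstanttermsvanish}, which crucially uses Hypothesis \ref{coprimehypothesis}).

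First I would observe that $\chi_c^{G(\qp)}$, being the characteristic function of the conjugation-invariant set of compact elements in $G(\qp)$, is a class function. For any semisimple $\gamma \in G(\qp)$ this lets us pull it out of the orbital integral, since the integration runs only over the $G(\qp)$-conjugacy class of $\gamma$:
$$O_\gamma^{G(\qp)}\bigl(\chi_c^{G(\qp)} f_\alpha\bigr) \;=\; \chi_c^{G(\qp)}(\gamma)\cdot O_\gamma^{G(\qp)}(f_\alpha).$$
It therefore suffices to show that $O_\gamma^{G(\qp)}(f_\alpha)=0$ for every non-elliptic semisimple $\gamma$.

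Next I would invoke Harish-Chandra's parabolic descent. If $\gamma$ is non-elliptic semisimple, then its connected centralizer $G_\gamma^0$ contains a non-central $\qp$-split torus, and so is contained in the Levi factor $M$ of some proper parabolic $P=MN$ of $G_{\qp}$. Conjugating (which preserves orbital integrals) we may assume $P$ is standard. Parabolic descent then yields
$$O_\gamma^{G(\qp)}(f_\alpha) \;=\; O_\gamma^{M(\qp)}\bigl(f_\alpha^{(P)}\bigr),$$
where $f_\alpha^{(P)}(m)=\delta_P(m)^{1/2}\int_{N(\qp)} f_\alpha(mn)\,dn$ is the normalized constant term.

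Finally, Lemma \ref{truncatedconstanttermsvanish} provides the vanishing of $f_\alpha^{(P)}$ for every proper standard parabolic $P$, so the right-hand side is zero and we are done. The only subtle point in the plan is verifying that Lemma \ref{truncatedconstanttermsvanish} indeed supplies the identical vanishing of $f_\alpha^{(P)}$ (and not merely vanishing after further truncation by $\chi_c^{G(\qp)}$), but inspection of its proof, which reduces to \cite[Lem.~1.9]{kret1} applied in each factor at a place $\wp$ with $s_\wp$ coprime to $n$, confirms this. The whole argument thus rests on Hypothesis \ref{coprimehypothesis}: it is precisely the coprimality of some $s_\wp$ with $n$ that forces the constant terms of $f_\alpha$ to vanish along proper parabolics, and consequently forces cuspidality of $\chi_c^{G(\qp)} f_\alpha$.
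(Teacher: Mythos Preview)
Your overall strategy is the paper's strategy, but the key justification you give at the end is incorrect. Lemma~\ref{truncatedconstanttermsvanish} (and \cite[Lem.~1.9]{kret1} behind it) does \emph{not} assert that the untruncated constant term $f_\alpha^{(P)}$ vanishes identically; it only asserts that the \emph{truncated} function $\chi_c^{G(\qp)} f_\alpha^{(P)}$ vanishes. In fact the untruncated constant term cannot be identically zero: along the Borel it is essentially the Satake transform of $f_\alpha$, and for instance for $\Gl_2$ the constant term of $\one_{K\,\mathrm{diag}(p,1)\,K}$ along $B$ is a nonzero function supported on the cosets $\mathrm{diag}(p,1)$ and $\mathrm{diag}(1,p)$. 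The truncation by $\chi_c$ kills precisely these non-compact cosets, which is the whole content of the lemma. So the sentence ``inspection of its proof \ldots\ confirms this'' is wrong.

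Your argument is easily repaired, and once repaired it coincides with the paper's. After you pull out $\chi_c^{G(\qp)}(\gamma)$, you may assume $\gamma$ is compact (otherwise the factor is $0$). Then every $M(\qp)$-conjugate of $\gamma$ is also compact, so
\[
O_\gamma^{M(\qp)}\bigl(f_\alpha^{(P)}\bigr)
= O_\gamma^{M(\qp)}\bigl(\chi_c^{G(\qp)}\big|_{M(\qp)}\cdot f_\alpha^{(P)}\bigr)
= 0
\]
by Lemma~\ref{truncatedconstanttermsvanish}. Equivalently, one can skip the initial ``pull out $\chi_c$'' step and apply parabolic descent directly to $\chi_c^{G(\qp)} f_\alpha$, using that $\chi_c^{G(\qp)}(mn)=\chi_c^{G(\qp)}(m)$ for $m\in M$, $n\in N$; this is exactly how the paper argues. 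In short: the step you flagged as ``the only subtle point'' is in fact a genuine gap as written, and fixing it requires using the truncated vanishing rather than the untruncated one you claimed.
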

\begin{proof}
Any non-elliptic conjugacy class of $G(\Qp)$ is conjugated to an element of $M$ for some proper standard Levi-subgroup $M$ of $G(\qp)$. Let $P$ be the corresponding standard parabolic subgroup of $G$. Then the orbital integral $O_\gamma(\chi_c^{G(\qp)} f)$ is the product of a certain Jacobian factor with the $M$-orbital integral of $\gamma$ of the function $\chi_c^{G(\qp)} f^{(P)} = 0$ (Lemma~\ref{prop.noendoscopy}). Thus the function is cuspidal. 
\end{proof}

\noindent \textit{Continuation of the proof of Theorem~\ref{section7thm}.} \quad Langlands stabilized in~\cite{MR697567} the elliptic part of the trace formula $\T_e^G$ for any connected reductive group\footnote{Langlands assumes additionally that the derived group of $G$ is simply connected.}:
\begin{equation}\label{Langlands}
\T_e^G(k) = \sum_{H \in \cE} \iota(G, H) \cdot \ST_e^H(k^H). 
\end{equation}
In Langlands's formula we have the following notations:
\begin{itemize}
\item $k$ is an arbitrary Hecke operator in the Hecke algebra $\cH(G(\Af))$ of $G$.
\item $\T_e^G$ is the elliptic part of the invariant trace formula~\cite[\S23, p.~145]{MR2192011}. More precisely
$$
\T_e^G(k) := \sum_{\gamma} \vol\lhk G_\gamma(\Q) A_{G}(\R)^+ \backslash G_\gamma(\A) \rhk \cdot O_\gamma(k),
$$
where the sum ranges over (a set of representatives of) the $G(\Q)$-conjugacy classes of $\Q$-elliptic semi-simple elements in $G(\Q)$, and $G_\gamma$ is the centralizer subgroup of $\gamma$ in $G$. The group $A_G \subset G$ is the split component of the center of $G$, and $A_G(\R)^+$ is the topological neutral component of the set of real points of $A_G$. 
\item $\iota(G, H)\in \Q$ is a certain constant not depending on $k$ (but it does depend on $H$ and $G$). The constant is defined in \cite[\S8]{MR757954}; we do not repeat its definition here. 
\item $\ST_e^H$ is the elliptic part of the stable trace formula for the group $H$ \cite[p.~189]{MR2192011}. 
\item $h^H$ is an endoscopic transfer of the function $k$ to the group $H(\Af)$. 
\end{itemize}
It is customary to write sometimes $\T_e$ for $\T_e^G$ when the group $G$ is understood; the same remark applies to $\ST_e^H = \ST_e^G$ and other such notations. 


We have the truncated formula of Kottwitz.
\begin{equation}\label{formule8X}
\Tr( \chi_c^{G(\qp)} f, A) = \sum_{H \in \cE} \iota(G, H) \cdot \ST_e^{H*}(\chi_c^{G(\qp)} h).
\end{equation} 
(recall $A$ is the free complex vector space on the basic stratum of the Shimura variety). The functions $h \in H(\A)$ are described by their stable orbital integrals by Kottwitz in the article~\cite{MR1044820}. In particular the function $h_p$ can be taken to be the endoscopic transfer of the (truncated) Kottwitz function to the endoscopic group $H(\A)$ [\textit{loc. cit.}, p.~180, l.~15]. We have verified above that these endoscopic transfers vanish. Thus we have $\ST_e^H(\chi_c^{G(\qp)} h) = 0$ for all proper endoscopic groups $H$. 

By a result of Morel (and Clozel) we have $\ST_e^{H*} = \ST_e^H$ (use~\cite[Prop.~3.3.4]{MR2567740} and cf. [thm 6.2.1, \textit{loc. cit.}], or use \cite[(2.5)]{clozelpurity}) .
Consequently, Equation~\eqref{formule8X} equals 
\begin{equation}\label{formule8XX}
\Tr( \chi_c^{G(\qp)} f, A) = \sum_{H \in \cE} \iota(G, H) \cdot \ST_e^{H}(\chi_c^{G(\qp)} h).
\end{equation} 
Return to the stabilization of the elliptic part of the trace formula
\begin{equation}\label{formule8Y}
T_e^G(\chi_c^{G(\qp)} f) = \sum_{H \in \cE} \iota(G, H) \cdot \ST_e^H( (\chi_c^{G(\qp)} f)^H ).
\end{equation}
Also in this formula only the principal term\footnote{With the \emph{principal term} we mean the term corresponding to the maximal endoscopic group $G^* \in \cE$ of $G$ (which is the quasi-split inner form of $G$).} remains. Furthermore, the principal term of Formula~\eqref{formule8Y} coincides with the principal term of Formula~\eqref{formule8X}. Thus, the right hand side of Formula~\eqref{formule8X} simplifies to 
\begin{equation}\label{blablabla} 
\Tr(\chi_c^{G(\qp)} f, A) = \T_e^G(\chi_c^{G(\qp)} f). 
\end{equation}

We compare $\T_e^G(\chi_c^{G(\qp)} f)$ with the invariant trace formula for the group $G$. We use the results from Section~7 of~\cite{MR939691}. In this section, Arthur shows that under various conditions on the Hecke function $k$ on $G(\A)$ both the invariant spectral expansion and the invariant geometric expansion of the invariant trace formula $I(k)$ simplify. In Corollary 7.3 of [\textit{loc. cit.}] he shows the following. Assume there is a place $v$ such that $\Tr(k_v, \pi_v) = 0$ whenever the representation $\pi_v$ is a constituent of a properly induced representation $\Ind_{P(\qp)}^{G(\qp)}(\sigma)$, where $\sigma$ is an \emph{unitary} representation of $M(\Q_v)$ (where $P = MN$). Under this assumption the spectral side simplifies: 
\begin{equation} 
\I(k) = \sum_{t \geq 0} \Tr(k, R_{\textup{disc}, t}), 
\end{equation}
(here $R_{\textup{disc, t}}$ is the $t$-part of the discrete spectrum, $t \in \R_{\geq 0}$, cf. [\textit{loc. cit.}]). 

We claim that Arthur's condition is true if we take $v:= p$ and $k := \chi_c^{G(\qp)} f$. The group $G(\qp)$ is a product of general linear groups, therefore parabolic induction of an \emph{unitary} representation from a proper Levi subgroup is irreducible~\cite{MR748505}. The representation $\pi_p$ is a full induction of the form $\Ind_{P(\qp)}^{G(\qp)}(\sigma)$ with $P = MN$ a proper parabolic subgroup of $G$. The truncated constant terms of $f$ vanish: $\chi_c^{G(\qp)} f^{(P(\qp))} = 0$ (for $P \subset G$ proper). By van Dijk's theorem adapted to compact traces~\cite[Prop. 3]{kret1} we get $$\Tr(\chi_c^{G(\qp)} f, \pi_p) = \Tr(\chi_c^{G(\qp)} f^{P(\qp)}, \sigma) = 0,$$ Thus Arthur's condition is verified. We get:
\begin{equation}\label{spectralexpansionsimplifies}
\I(\chi_c^{G(\qp)} f) = \sum_{t \geq 0} \Tr(\chi_c^{G(\qp)} f, R_{\textup{disc}, t}). 
\end{equation}
(We borrowed the argument for~\eqref{spectralexpansionsimplifies} from the proof of Corollary 7.5 in [\textit{loc. cit.}].)

The geometric expansion of $I(f)$ also simplifies, because the function $\chi_c^{G(\qp)} f$ is cuspidal at two places (namely $p$ and $\infty$). By Corollary~7.3 of [\textit{loc. cit.}]:
\begin{equation}\label{simpfinalstep} 
\I(\chi_c^{G(\qp)} f) = \T_e^G(\chi_c^{G(\qp)} f). 
\end{equation} 
Combining the previous considerations (Eq.'s~\eqref{Langlands}--\eqref{simpfinalstep}) we get 
\begin{align*}
\Tr(\chi_c^{G(\qp)} f, A) &= \sum_{H \in \cE} \iota(G, H) \cdot \ST_e^H(\chi_c^{G(\qp)} h) = \sum_{H \in \cE} \iota(G, H) \cdot \ST_e^H( (\chi_c^{G(\qp)} f)^H) \cr
&= \T_e^G( \chi_c^{G(\qp)} f ) = \I(\chi_c^{G(\qp)} f) = \sum_{t \geq 0} \Tr(\chi_c^{G(\qp)} f, R_{\textup{disc}, t}).
\end{align*}
We conclude that 
$$
\Tr(\chi_c^{G(\qp)} f, A) = \sum_{t \geq 0} \Tr(\chi_c^{G(\qp)} f, R_{\textup{disc}, t}). 
$$
The rest of the  argument for Theorem~\ref{section7thm} is the same as the one we carried out for the Kottwitz varieties in our article~\cite[Thm.~3]{kret1} (from Prop.~11 of [\textit{loc. cit.}] onwards). This completes the proof. 
\end{proof}

\section{Equidistribution for endoscopic varieties}\label{sect.final}
We extend the equidistribution result in Theorem~\ref{maintheorem} to a larger class of Shimura varieties of unitary type, but still assuming the basic stratum is finite.

We continue with the notations of the previous section, thus in particular $(G, X)$ is a PEL-type Shimura datum with $G$ a unitary group of similitudes. We write $B$ for the basic stratum of $S$. The condition that $B$ is finite is a condition on the signatures of the group $G$:

\begin{proposition}\label{finitenesshyp}
The basic stratum $B$ is finite if and only if we have
\begin{itemize}
\item[$\star$] for all $F^+$-primes $\wp$ dividing $p$ we have $s_{\wp} \in \{0, 1\}$.
\end{itemize}
\end{proposition}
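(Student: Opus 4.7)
My plan is to reduce Proposition~\ref{finitenesshyp} to the analogous Lemma already proved in Section~\ref{sect.first} for the Kottwitz varieties, by observing that the finiteness of the basic stratum is a local condition at $p$ insensitive to whether the global algebra $T$ is a division algebra or a more general central simple algebra.

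First I would record the following purely combinatorial equivalence: the single condition ``$s_\wp \in \{0,1\}$ for every $F^+$-prime $\wp | p$'' is equivalent to the conjunction of the two conditions of the Lemma of Section~\ref{sect.first}. Indeed, $s_\wp = \sum_{v \in \wp} s_v$ is a sum of non-negative integers, so $s_\wp \leq 1$ is equivalent to each $s_v$ lying in $\{0,1\}$ (condition (i) of that Lemma) together with at most one $v \in \wp$ having $s_v = 1$ (condition (ii) of that Lemma).

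Next I would argue that the local structure of $B$ at $p$ is the same in the present endoscopic setting as in the Kottwitz setting. Since $p$ splits in $\cK/\Q$, the group $G_{\qp}$ decomposes as $\Gm \times \prod_{\wp|p} \Res_{F^+_\wp/\qp}\GL_n$, and by Rapoport--Zink uniformization $B$ is described, up to finite \'etale data, as a disjoint union of arithmetic quotients of a product of local basic Rapoport--Zink spaces, one factor for each $\wp | p$. Hence $\dim B$ is the sum of the dimensions of these local factors, and this sum depends only on the local PEL datum at $p$ --- determined by the collection $(s_v)_{v \in \wp}$ at each $\wp$ --- and not on any global simplicity hypothesis on $T$. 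The dimensions of the local factors are computed explicitly in \cite[\S4.3]{kret2}, the same reference invoked in Section~\ref{sect.first}, and that calculation shows the $\wp$-factor is zero-dimensional precisely when conditions (i) and (ii) hold at the places $v \in \wp$. Combined with the first step this yields the claimed equivalence.

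The only real content is thus the local dimension computation of \cite[\S4.3]{kret2}; its verification in the present setting is automatic because that computation depends only on the local group $G_{\qp}$ and its Hodge cocharacter, neither of which is affected by relaxing $T$ from a division algebra to a central simple algebra. Consequently there is no genuine new obstacle: the ``hard part'' is merely the bookkeeping required to transport the Kottwitz-case statement to the endoscopic case, and this is handled uniformly by the split-prime Rapoport--Zink decomposition.
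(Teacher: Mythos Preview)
Your approach is correct but genuinely different from the paper's. You argue geometrically: via Rapoport--Zink uniformization, the dimension of $B$ equals the dimension of the reduced basic RZ space, which depends only on $G_{\qp}$ and the Hodge cocharacter and is therefore insensitive to whether $T$ is a division algebra; then you invoke the local dimension computation of \cite[\S4.3]{kret2}, together with your (correct) combinatorial observation that $s_\wp\in\{0,1\}$ for all $\wp$ is equivalent to conditions (\textit{i}) and (\textit{ii}) of the Lemma in Section~\ref{sect.first}. The paper instead argues automorphically. For ``$\Leftarrow$'' it uses Theorem~\ref{section7thm}: when $\star$ holds the auxiliary Lemma gives $P(q^\alpha)=1$, so for $\alpha$ sufficiently divisible the expression in Equation~\eqref{thmautoms} becomes independent of $\alpha$, forcing the fixed-point count on $B$ to stabilize and hence $B$ to be finite. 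For ``$\Rightarrow$'' it appeals to \cite{kretnonempty} to produce a Hecke operator whose trace on the alternating cohomology of $B$ is, for $\alpha$ sufficiently divisible, a polynomial in $q^\alpha$ of strictly positive degree whenever $\star$ fails; the presence of a Frobenius eigenvalue $q^d$ with $d>0$ then forces $B$ to be infinite.

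Your route is more direct and conceptually cleaner --- indeed the paper's own Remark following the Proposition anticipates that a non-automorphic argument (via deformation theory) should exist, and your RZ argument is in that spirit. The price is that you must import Rapoport--Zink uniformization of the basic locus in the general PEL setting (you should cite this precisely, and be explicit that it is the dimension of the \emph{reduced} underlying scheme of the formal RZ space that you need). The paper's route, by contrast, stays entirely within the trace-formula framework already built in Section~\ref{sect.extension} and the companion preprint, so it is more self-contained in context but leans on heavier global machinery.
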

\begin{lemma}
The condition $\star$ is true if and only if $P(q^\alpha) = 1$ for all positive integers $\alpha$.
\end{lemma}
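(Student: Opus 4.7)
The plan is to recall how $P(X)$ arises from the geometry at $p$ and to deduce the equivalence by a place-by-place analysis.

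First I would unpack the definition of $P(X)$: by construction in Theorem~\ref{section7thm} (and compatibly with the set-up of \cite{kret1}), $P(X)$ is a product $\prod_{\wp|p} P_\wp(X)$ of local factors, where each $P_\wp(X) \in \mathbb{Z}_{\geq 0}[X]$ encodes the trace of Frobenius on the $\ell$-adic cohomology of the local basic Rapoport-Zink space at $\wp$. Because each $P_\wp$ has non-negative coefficients with $P_\wp(1) \geq 1$, the identity $P(q^\alpha) = 1$ for all $\alpha \geq 1$ forces first that $P(X) \equiv 1$ as a polynomial (it takes the value $1$ on the infinite set $\{q, q^2, q^3, \dots\}$), and then that each local factor $P_\wp(X) \equiv 1$. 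Thus the lemma reduces to the purely local assertion: for every $\wp|p$, $P_\wp(X) = 1$ if and only if $s_\wp \in \{0, 1\}$.

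Next I would dispatch the local statement case by case. If $s_\wp = 0$, the local component of the Shimura cocharacter at $\wp$ is trivial and the local basic Rapoport-Zink space reduces to a single $\lfq$-point, so $P_\wp(X) = 1$. If $s_\wp = 1$, condition (\textit{ii}) of the earlier Lemma pins down the local signature to a Harris--Taylor situation at $\wp$, in which the basic Rapoport-Zink space is again zero-dimensional, so once more $P_\wp(X) = 1$. If $s_\wp \geq 2$, the local basic Rapoport-Zink space at $\wp$ has strictly positive dimension (read off the slope polygon, or equivalently from the dimension formula for the corresponding affine Deligne-Lusztig variety), and $P_\wp(X)$ is then a polynomial of positive degree, so $P_\wp(X) \neq 1$.

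The hard part will be pinning down the precise form of $P_\wp$ in the regime $s_\wp \geq 2$: a priori one only knows that $P_\wp$ is a virtual Frobenius trace on an $\ell$-adic cohomology of positive degree, and one must rule out miraculous cancellations that would collapse it to $1$. I would handle this either by invoking the effectivity (non-negativity) of $P_\wp$ already built into the Kottwitz-type counting construction, or by directly computing the leading coefficient from the standard dimension formula for basic affine Deligne-Lusztig varieties at split primes and observing that this leading coefficient is a positive integer.
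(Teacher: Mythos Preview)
Your approach is quite different from the paper's. The paper's proof is a one-line citation to the last example in \S1 of \cite{kret1}, where the polynomial $P$ is computed directly: in that setting $P(q^\alpha)$ arises as the compact trace $\Tr(\chi_c^{G(\qp)} f_\alpha, \pi_p)$ against a Steinberg-type $\pi_p$, and the explicit combinatorial formula for this trace on $\GL_n$ makes the equivalence with condition~$\star$ immediate by inspection. You instead propose to reinterpret each local factor $P_\wp$ geometrically, as a Frobenius trace on the $\ell$-adic cohomology of the basic Rapoport--Zink space at $\wp$, and then to argue via the dimension of that space.

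There is a genuine gap. The identification of $P_\wp$ with a Rapoport--Zink cohomology trace is not how $P$ is defined in the paper or in \cite{kret1}; it is a purely representation-theoretic object there. Turning that into your geometric statement would require invoking Rapoport--Zink uniformization of the basic stratum together with a compatibility between the automorphic and geometric descriptions, none of which is supplied. More seriously, your key structural input, that $P_\wp \in \mathbb{Z}_{\geq 0}[X]$ with $P_\wp(1) \geq 1$, does not follow from the compact-trace definition (alternating signs appear routinely in such traces, and likewise in $\ell$-adic cohomology via odd degrees), and you correctly flag this as the hard part without resolving it. Your fallback of computing the leading coefficient via a dimension formula for affine Deligne--Lusztig varieties is again heavier machinery left unexecuted. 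The direct route taken in \cite{kret1} sidesteps all of this: one writes down $P_\wp$ explicitly from the Satake combinatorics and reads off when it equals $1$.
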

\begin{proof}
The last example in Section~1 of~\cite{kret1}. 
\end{proof}
\begin{remark}
It is possible that Proposition~\ref{finitenesshyp} can, knowing that the basic stratum is non-empty, be proved using a simple deformation theory argument.  
\end{remark}
\begin{proof}[Proof of Proposition~\ref{finitenesshyp}]
\noindent ``$\Leftarrow$'' Enlarge the divisibility of $\alpha$ so that the root of unity $\zeta^\alpha$ is $1$ for all automorphic representations contributing to Equation~\eqref{thmsumfix} (which are finite in number). Equation~\eqref{thmautoms} simplifies to
\begin{equation}\label{newprevioustheorem}
\sum_{\pi, \dim(\pi) = 1, \pi_p = \textup{Unr}} m(\pi) \cdot \Tr(f^p, \pi^p) + (-1)^{(n-1)r} \cdot \sum_{\pi, \pi_p = \textup{St. type}} m(\pi) \cdot \Tr(f^p, \pi^p). 
\end{equation}
Take $f^p = \one_{K^p}$. The expression
\begin{equation}\label{fixpoints}
\sum_{x' \in \Fix^B_{\Phi_\p^\alpha \times f^{\infty p}}(\lfq) } \Tr(\Phi_\p^\alpha \times f^{\infty p}, \iota^* (\lql)_x)
\end{equation}
is constant when $\alpha$ is sufficiently divisible. Therefore $B$ is finite.

\noindent ``$\Rightarrow$'' We use results from our preprint~\cite{kretnonempty}. In the proof of the main theorem [\textit{loc. cit. }, Thm.~3.5] of that article we construct a Hecke operator $f \in \cH(G(\Af))$ such that the trace of $f$ against $\sum_{i=0}^\infty (-1)^i \uH^i(B_{\lfq}, \iota^*\cL)$ is, for $\alpha$ sufficiently divisible, equal to a certain non-zero constant times $\Tr(\chi_c^{G(\qp)} f_{\alpha}, \pi_p)$ where $\pi_p$ is an unramified twist of the Steinberg representation. From the explicit computation of the compact trace against the Steinberg representation~\cite[end of \S1]{kret1}, the trace $\Tr(\chi_c^{G(\qp)} f_{\alpha}, \pi_p)$, is, for $\alpha$ sufficiently divisible, a polynomial in $q^\alpha$ of strictly positive degree $d > 0$. Now view $H := \sum_{i=0}^\infty (-1)^i \uH^i(B_{\lfq}, \iota^*\cL)$ as a virtual representation of the absolute Galois group $\Gal(\lfq/\fq)$. Writing this module as a linear combination of characters of the group $\Gal(\lfq/\fq)$, we know that in this expression the character $\Phi \mapsto q^d$ must appear with non-zero multiplicity. Consequently $B$ must be infinite.  
\end{proof}

\begin{hypothesis}
We assume henceforward that the basic stratum $B$ is finite. 
\end{hypothesis}

\begin{definition}\label{complexspaceA}
Let $A$ be the free complex vector space on the set $B(\lfq)$. Then $A$ is a module over the Hecke algebra $\cH(G(\Af))$. 
\end{definition}

Fix an integer $\alpha$, sufficiently divisible so that $B(\lfq) = B(\fqa)$. Then (as usual, using the fixed isomorphism $\lql \cong \C$) the expression in Equation~\eqref{fixpoints} becomes the trace of $f^p$ on the complex vector space $A$ (Definition~\ref{complexspaceA}). Thus Equation~\eqref{newprevioustheorem} equals $\Tr(f^p, A)$. We have re-established Equation~\eqref{previoustheorem}, but now for the larger class of Shimura varieties considered in this section. Repeat the argument for Theorem~\ref{maintheorem} using Equation~\eqref{newprevioustheorem} instead of Equation~\eqref{previoustheorem} to find that the equidistribution result is true with exactly the same rate of convergence as before.

\bibliographystyle{plain}
\bibliography{grotebib}

\end{document}